\newtheorem{thm}{Theorem}[section]
\newtheorem{cor}[thm]{Corollary}
\newtheorem{lem}[thm]{Lemma}
\newtheorem{prop}[thm]{Proposition}
\theoremstyle{definition}
\theoremstyle{remark}
\newtheorem{rem}[thm]{Remark}
\numberwithin{equation}{section}
\DeclareMathOperator*{\diver}{div}
\def\cleardoublepage{\clearpage\if@twoside \ifodd\c@page\else
	\hbox{}
	\thispagestyle{empty}
	\newpage
	\if@twocolumn\hbox{}\newpage\fi\fi\fi}
\title{A counterexample to the monotone increasing behavior of an Alt-Caffarelli-Friedman formula in the Heisenberg group}
\author{Fausto Ferrari}
\address{Fausto Ferrari: Dipartimento di Matematica\\ Universit\`a di Bologna\\ Piazza di Porta S.Donato 5\\ 40126, Bologna-Italy}
\email{fausto.ferrari@unibo.it }
\author{Nicol\`o Forcillo}
\address{Nicol\`o Forcillo: Dipartimento di Matematica\\ Universit\`a di Bologna\\ Piazza di Porta S.Donato 5\\ 40126, Bologna-Italy}
\email{nicolo.forcillo2@unibo.it }
\thanks{F.F.  and N.F. are partially supported by INDAM-GNAMPA-2019 project: {\it Propriet\`a di regolarit\`a delle soluzioni viscose con applicazioni a problemi di frontiera libera.}}
\keywords{Alt-Caffarelli-Friedman monotonicity formula, Heisenberg group,  two phase free boundary problems.
\\
\indent 2020 {\it Mathematics Subject Classification.} 35R03,
35R35}
\date{\today}
\begin{document}
	\begin{abstract}
		In this paper we provide a counterexample about the existence of an increasing monotonicity behavior of a function introduced in \cite{FeFo}, companion of the celebrated Alt-Caffarelli-Friedman monotonicity formula, in the noncommutative framework.
	\end{abstract}
	\maketitle
	
    \tableofcontents	
	\section{Introduction}
	In this paper we continue the research about the existence of a monotonicity formula in the Heisenberg group started in \cite{FeFo}, see also \cite{Forcillo} and \cite{FeFo0}. More precisely, we prove that there exists a function $u$ such that, denoting $u^+:=\sup\{u,0\}$ and $u^{-}:=\sup\{-u,0\}$  defined in a neighborhood of $0\in \mathbb{H}^1,$  if $\Delta_{\mathbb{H}^1}u^\pm\geq 0$   and $u(0)=0,$ then 
	\begin{equation}\label{moneqH}
		J_{u}^{\mathbb{H}^1}(r)\coloneqq \frac{1}{r^4}\int_{B^{\mathbb{H}^1}_r(0)}\frac{|\nabla_{\mathbb{H}^1} u^+(\xi)|^2}{|\xi|_{\mathbb{H}^1}^{2}}\int_{B^{\mathbb{H}^1}_r(0)}\frac{|\nabla_{\mathbb{H}^1} u^-(\xi)|^2}{|\xi|_{\mathbb{H}^1}^{2}}\hspace{0.05cm}d\xi,
	\end{equation}
	$ \xi=(x,y,t)\in \mathbb{H}^1,$ is not monotone increasing in a possibly small right neighborhood of $0,$ where $\mathbb{H}^1$ is the first Heisenberg group.  
	
	In order to better understand the profile of this result, we recall that in \cite{ACF} and \cite{C3} the celebrated monotonicity formula, in the Euclidean setting, was applied to prove regularity results about viscosity solutions of two phase problems like
	\begin{equation}\label{fbpa}
		\left\{
		\begin{array}{l}
			\Delta u=0\:\:\mbox{in}\:\: A^+(u):=\{x\in A:\:u(x)>0\},\\
			\Delta u=0\:\:\mbox{in}\:\: A^-(u):=\mbox{Int}(\{x\in A:\:u(x)\leq 0\}),\\
			|\nabla u^+|^2-|\nabla u^-|^2=1\:\:\mbox{on}\:\:\mathcal{F}(u):=\partial A^+(u)\cap A,
		\end{array}
		\right.
	\end{equation} 	
	where $A \subset \mathbb{R}^n$ is an open set and $u\in C(A)$ is a viscosity solution, see \cite{C1} or \cite{CS} for such definition.
	
	In particular, in \cite{ACF} the authors proved that for every solution $u\in H^1(A)$ of \eqref{fbpa} and for every $P_0\in \mathcal{F}(u),$  the function  
	\begin{equation}\label{monotone_Euclid}
	J_u(r)\coloneqq \frac{1}{r^4}\int_{B_r(P_0)}\frac{|\nabla u^+(P)|^2}{|P-P_0|^{n-2}}dP\int_{B_r(P_0)}\frac{|\nabla u^-(P)|^2}{|P-P_0|^{n-2}}dP
	\end{equation}
	is monotone increasing in a right neighborhood of $0.$ Such tool has been widely employed for proving  regularity results of the solutions of \eqref{fbpa}. About this, we recall \cite{DFSsurvay} for an overview concerning recent results on two-phase problems in the Euclidean framework, see also \cite{CS}. 
	
	As a consequence, in order to study a two-phase problem in a noncommutative group, like the Heisenberg one, it would be useful to have the companion monotonicity formula to \eqref{monotone_Euclid} in this framework.  Indeed, in \cite{DF} the two-phase problem analogous to \eqref{fbpa} has been settled down, i.e. 
	\begin{equation}\label{two_phase_Heisenberg_2}
		\begin{cases}
			\Delta_{\mathbb{H}^n} u=0& \mbox{in }\Omega^+(u):= \{x\in \Omega:\hspace{0.1cm} u(x)>0\},\\
			\Delta_{\mathbb{H}^n} u=0& \mbox{in }\Omega^-(u):=\mbox{Int}(\{x\in \Omega:\hspace{0.1cm} u(x)\leq 0\}),\\
			|\nabla_{\mathbb{H}^n} u^+|^2-|\nabla_{\mathbb{H}^n} u^-|^2= 1&\mbox{on }\mathcal{F}(u):=\partial \Omega^+(u)\cap \Omega,
		\end{cases}
	\end{equation}
	
	where $\Omega\subset \mathbb{H}^n$ and $\Delta_{\mathbb{H}^n},$  $\nabla_{\mathbb{H}^n}$ denote the Kohn-Laplace operator and the horizontal gradient respectively. We refer to Section \ref{Heisenberg_setting} for the main notation and definitions about this noncommutative structure.

	In \cite{FeFo} we proved that if an intrinsic monotone increasing formula exists in $\mathbb{H}^1$, then it should be like \eqref{moneqH}. More precisely, in that paper, 
	supposing $\beta >0,$ we denoted by
	\begin{equation}\label{moneqHold}
		J^{\mathbb{H}^n}_{\beta,u}(r)\coloneqq \frac{1}{r^\beta}\int_{B^{\mathbb{H}^n}_r(0)}\frac{|\nabla_{\mathbb{H}^n} u_1(\zeta)|^2}{|\zeta|_{\mathbb{H}^n}^{Q-2}}\int_{B^{\mathbb{H}^n}_r(0)}\frac{|\nabla_{\mathbb{H}^n} u_2(\zeta)|^2}{|\zeta|_{\mathbb{H}^n}^{Q-2}}\hspace{0.05cm}d\zeta
	\end{equation}
	and we looked for $\beta$ such that $J^{\mathbb{H}^n}_{\beta,u}$ is monotone increasing. Here, $Q:=2n+2$ is the homogeneous dimension of $\mathbb{H}^n.$ For the sake of simplicity, we considered the simplest case only, given by $n=1.$ Hence, we worked on
	\begin{equation}\label{moneqHold1}
		J^{\mathbb{H}^1}_{\beta, u}(r)\coloneqq \frac{1}{r^\beta}\int_{B^{\mathbb{H}^1}_r(0)}\frac{|\nabla_{\mathbb{H}^1} u_1(\xi)|^2}{|\xi|_{\mathbb{H}^1}^{2}}\int_{B^{\mathbb{H}^1}_r(0)}\frac{|\nabla_{\mathbb{H}^1} u_2(\xi)|^2}{|\xi|_{\mathbb{H}^1}^{2}}\hspace{0.05cm}d\xi
	\end{equation}
	only.
	
	On the other hand, since the function $u(x,y,t)=\alpha_1 x^+-\alpha_2 x^-,$  for some fixed numbers $\alpha_i\geq 0,$ $i=1,2,$ satisfies $\Delta_{\mathbb{H}^1}u=0$ in $\{u>0\},$ as well as $\Delta_{\mathbb{H}^1}u=0$ in $\{u<0\},$ we checked that,  if $\beta=4,$
	then $J^{\mathbb{H}^1}_u(r)$ is constant. 
Hence, supposing that $\beta$ is constant and assuming that  $J^{\mathbb{H}^1}_u$ is monotone increasing as well, then necessarily $\beta\leq 4.$

We did not manage to conclude that $J^{\mathbb{H}^1}_u$ is monotone increasing for all the admissible functions, because following the strategy described in \cite{CS}, we needed of some sharp results in geometric measure theory that, in the Heisenberg group, are not known yet, see \cite{FeFo} for the details.  	

	As a consequence, in order to deepen that research, we decided to follow another strategy already available  in the Euclidean case $\mathbb{R}^n.$  More precisely, for selecting the right exponent $\beta,$  and possibly deducing the increasing monotone behavior as well of $J_u$ in $\mathbb{R}^n,$ it is useful to study the behavior of the function
	\begin{equation}\label{functional-harm-funct0}
		\mathcal{I}_{u}(r)\coloneqq \frac{1}{r^2}\int_{B_r(0)}\frac{|\nabla u(P)|^2}{|P|^{n-2}}\hspace{0.05cm}dP
	\end{equation}
when $u$ is harmonic.

	We recall in Section \ref{Euclidean_case}  this last approach already used in \cite{PSU} to introduce the Alt-Caffarelli-Friedman monotone formula. Hence, we decided to follow that parallel proof adapting it to the Heisenberg group.

Nevertheless, on contrary to what we supposed, we discover that  there exists at least a function $u$ such that $\Delta_{\mathbb{H}^1}u=0$ and
	
	\begin{equation}\label{functional-harm-funct0}
		\mathcal{I}_{u}^{\mathbb{H}^1}(r)\coloneqq \frac{1}{r^2}\int_{B_r^{\mathbb{H}^1}(0)}\frac{|\nabla_{\mathbb{H}^1} u(\xi)|^2}{|\xi|^{2}_{\mathbb{H}^1}} \hspace{0.05cm}d\xi 
	\end{equation}
	
	is strictly monotone decreasing in a small right neighborhood of $0,$ differently from what it happens in the Euclidean case.
	
	Hence, starting from this result, we obtain that  $J^{\mathbb{H}^1}_{u}$ is strictly monotone decreasing for a careful choice of $u$. More precisely,  $J^{\mathbb{H}^1}_{u}$ cannot be monotone increasing for all the admissible functions, since the behavior of $J^{\mathbb{H}^1}_{u}$ depends on the choice of $u$ itself. In particular, if $u=x$ we have that $J^{\mathbb{H}^1}_{u}$ is constant, while if 
	$$
	u=x-3yt-2x^3,
	$$
	which satisfies $\Delta_{\mathbb{H}^1}u=0,$ then $J^{\mathbb{H}^1}_{u}$ is strictly monotone decreasing. 
This fact depends on the lack of orthogonality of the intrinsic harmonic polynomials in the Heisenberg group. 

Our main result, whose proof is contained in Section \ref{decreasing_formula}, is the following one.
	\begin{thm}\label{maintheorem}
		Let $u=x-3yt-2x^3.$ Then $\Delta_{\mathbb{H}^1}u=0$ and $J^{\mathbb{H}^1}_u(r)$ is strictly monotone decreasing in a right neighborhood of $r=0.$ 
	\end{thm}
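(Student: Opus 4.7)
The plan is to reduce the two-integrand Alt-Caffarelli-Friedman functional $J^{\mathbb{H}^1}_u$ to the single-integrand companion $\mathcal{I}^{\mathbb{H}^1}_u$ by exploiting a symmetry of the specific $u$, and then to expand $\mathcal{I}^{\mathbb{H}^1}_u(r)$ in powers of $r$ using the intrinsic dilations.

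First I would verify $\Delta_{\mathbb{H}^1} u = 0$ by a direct computation of the horizontal vector fields on $u$, recording at the same time
$$|\nabla_{\mathbb{H}^1} u|^2 = 1 - 12(x^2+y^2) + R(x,y,t),$$
where $R$ is polynomial in $x,y,t$ and consists entirely of terms of Heisenberg weighted degree $\geq 4$ (weights $1,1,2$ for $x,y,t$).

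The central step is a symmetry reduction. Consider the linear involution $\sigma(x,y,t) = (-x,y,-t)$. One checks that $u \circ \sigma = -u$, that $\sigma$ preserves both the Kor\'anyi gauge and the Lebesgue measure (hence the ball $B^{\mathbb{H}^1}_r(0)$), and that its interaction with the horizontal vector fields gives $|\nabla_{\mathbb{H}^1}(\phi \circ \sigma)|^2 = |\nabla_{\mathbb{H}^1} \phi|^2 \circ \sigma$ for every smooth $\phi$. Applied to $\phi = u^+$, for which $u^+ \circ \sigma = u^-$, a change of variable yields
$$\int_{B^{\mathbb{H}^1}_r(0)} \frac{|\nabla_{\mathbb{H}^1} u^+|^2}{|\xi|^2_{\mathbb{H}^1}}\,d\xi = \int_{B^{\mathbb{H}^1}_r(0)} \frac{|\nabla_{\mathbb{H}^1} u^-|^2}{|\xi|^2_{\mathbb{H}^1}}\,d\xi.$$
Combined with the a.e.\ identity $|\nabla_{\mathbb{H}^1} u|^2 = |\nabla_{\mathbb{H}^1} u^+|^2 + |\nabla_{\mathbb{H}^1} u^-|^2$, this reduces the claim to the clean identity
$$J^{\mathbb{H}^1}_u(r) = \tfrac{1}{4}\,\bigl(\mathcal{I}^{\mathbb{H}^1}_u(r)\bigr)^2,$$
so it suffices to show that $\mathcal{I}^{\mathbb{H}^1}_u$ is strictly decreasing in a right neighborhood of $0$.

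For the expansion I would rescale through the intrinsic dilations $\delta_\lambda(x,y,t) = (\lambda x, \lambda y, \lambda^2 t)$: each monomial $x^a y^b t^c/|\xi|^2_{\mathbb{H}^1}$ integrated on $B^{\mathbb{H}^1}_r(0)$ contributes a factor $r^{a+b+2c+2}$, and all odd-parity contributions (in particular the $xyt$ term from $|\nabla_{\mathbb{H}^1} u|^2$) vanish by symmetry. Hence
$$\mathcal{I}^{\mathbb{H}^1}_u(r) = A - 12 B\, r^2 + O(r^4),$$
where
$$A = \int_{B^{\mathbb{H}^1}_1(0)} \frac{d\eta}{|\eta|^2_{\mathbb{H}^1}}, \qquad B = \int_{B^{\mathbb{H}^1}_1(0)} \frac{\eta_x^2 + \eta_y^2}{|\eta|^2_{\mathbb{H}^1}}\,d\eta,$$
and both constants are strictly positive. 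Differentiating, $(\mathcal{I}^{\mathbb{H}^1}_u)'(r) = -24 B\, r + O(r^3) < 0$ for $r$ in some interval $(0,r_0)$, and the corresponding strict monotone decrease of $J^{\mathbb{H}^1}_u = (\mathcal{I}^{\mathbb{H}^1}_u)^2/4$ follows. The main obstacle I anticipate is the symmetry reduction itself: the identity $J^{\mathbb{H}^1}_u = (\mathcal{I}^{\mathbb{H}^1}_u)^2/4$ is tailored to this particular $u$ and requires careful verification that $\sigma$ is compatible with the horizontal structure; once this is in place, the scaling expansion for $\mathcal{I}^{\mathbb{H}^1}_u$ is essentially bookkeeping.
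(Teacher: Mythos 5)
Your proposal is correct and follows essentially the same route as the paper: it reduces $J^{\mathbb{H}^1}_u=\mathcal{I}^{\mathbb{H}^1}_{u^+}\mathcal{I}^{\mathbb{H}^1}_{u^-}$ to $\tfrac14\bigl(\mathcal{I}^{\mathbb{H}^1}_u\bigr)^2$ via a gauge- and measure-preserving reflection sending $u^+$ to $u^-$ (you use $(x,y,t)\mapsto(-x,y,-t)$, the paper uses $(x,y,t)\mapsto(-x,-y,t)$, both compatible with the horizontal gradient norm), and then shows $\mathcal{I}^{\mathbb{H}^1}_u(r)$ has zero $r$-coefficient and strictly negative $r^{2}$-coefficient coming from the $-12(x^2+y^2)$ term in $|\nabla_{\mathbb{H}^1}u|^2$. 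The only cosmetic difference is that you obtain the expansion by dilation scaling of solid integrals, while the paper uses the coarea formula on Kor\'anyi spheres; the conclusion and the key negativity are the same.
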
	
	In Section \ref{explicit_computation} we show the explicit computation	of the fact that $\mathcal{I}^{\mathbb{H}^1}_u$ is strictly monotone decreasing, obtaining the main tool for proving Theorem \ref{maintheorem}. In Section \ref{gen_app} we provide an extension of our argument and an application exhibiting a genuine nontrivial example of solution to a two-phase  free boundary problem in the Heisenberg group. We conclude this introduction pointing out that in Section \ref{harmonic_properties} we study the behavior of harmonic polynomials in order to justify the way we obtained the specific counterexample we need to. In fact, independently to our application, we think  that the result contains therein may be interesting and useful by itself, especially for further applications. For instance in case we want to determine other polynomials with special properties with respect to $\Delta_{\mathbb{H}^1}$.
	\section{Preliminary facts: the Euclidean case}	\label{Euclidean_case}
	In this section, we  provide a preliminary fact 
	inspired by the argument contained in \cite{PSU} that here below we recall in detail.
	
	%
	
	More precisely, given a harmonic function $u$ in an open set $A\subset \mathbb{R}^n,$ we consider the following function depending on the radius $r$ of the Euclidean ball $B_r(0)\subset A:$ 
	\begin{equation}\label{functional-harm-funct}
		\mathcal{I}_{u}(r)\coloneqq \frac{1}{r^2}\int_{B_r(0)}\frac{|\nabla u(P)|^2}{|P|^{n-2}}\hspace{0.05cm}dP.
	\end{equation}
	Here $|P|$ denotes the usual Euclidean norm of $P\in A\subset \mathbb{R}^n,$ as well as $|\nabla u(P)|$ is the usual Euclidean norm of the Euclidean gradient of the function $u.$
	
	We want to rewrite \eqref{functional-harm-funct} in a more convenient way, for understanding its behavior when $r\to0.$ To this end, we write down the Taylor expansion of $u$ at $0$
	\[u(x)=\sum_{\alpha}a_{\alpha}x^{\alpha},\]
	where $x=(x_n,\dots,x_n)\in \mathbb{R}^n,$
	$\alpha\coloneqq (\alpha_1,\ldots,\alpha_n)\in (\mathbb{N}\cup\{0\})^n$ is a multi-index, $|\alpha|=\sum_{i=1}^n\alpha_i,$ $x^{\alpha}:=x_1^{\alpha_1}\cdots x_n^{\alpha_n}.$

	In particular, in our case, $u$ is harmonic. Hence we can group together the terms with the same homogeneity to achieve
	\begin{equation}\label{expansion-in-homog-polyn-harm-funct}
		u(x)=\sum_{|\alpha|=k=0}^{\infty}P_{k}(x),
	\end{equation}
	where $P_{k}(x)$ is a homogeneous polynomial of degree $k.$ On the other hand, being $u$ harmonic, each $P_k(x)$ has to be harmonic as well. As a consequence, we have that $u$ is a sum (possibly infinite) of homogeneous harmonic polynomials.

	Now, if we substitute \eqref{expansion-in-homog-polyn-harm-funct} into $\mathcal{I}_u,$ see \eqref{functional-harm-funct}, we have to compute the gradient of the homogeneous polynomials $P_{k}.$ For this purpose, we collect this calculation in the following technical lemma.
	\begin{lem}\label{lemma-grad-homog-polyn}
		Let 
		\[P_{k}(x)=\sum_{\beta,\hspace{0.1cm}|\beta|=k}b_{\beta}x^{\beta}\]
		be a homogeneous polynomial of degree $k.$ Then 
		\[\nabla P_{k}(x)=\sum_{\beta,\hspace{0.1cm}|\beta|=k}b_{\beta}(\beta_1 x_1^{\beta_1-1}x_2^{\beta_2}\cdots x_n^{\beta_n},\ldots,\beta_n x_1^{\beta_1}x_2^{\beta_2}\cdots x_n^{\beta_n-1}).\] 
	\end{lem}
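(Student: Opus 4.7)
The statement is a routine identity: the gradient of a polynomial expanded in a monomial basis, computed coordinate by coordinate. The plan is to use linearity of $\nabla$ together with the one-variable power rule, with a small bookkeeping remark about vanishing exponents.

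First I would observe that, by linearity of differentiation, it suffices to compute $\nabla(x^\beta)$ for a single multi-index $\beta=(\beta_1,\dots,\beta_n)$ with $|\beta|=k$, and then sum the results weighted by the coefficients $b_\beta$. So the task reduces to a single monomial.

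Next, for a fixed $\beta$ and for each $j\in\{1,\dots,n\}$, I would compute the $j$-th partial derivative by treating the variables $x_i$ with $i\neq j$ as constants and applying the one-variable power rule to $x_j^{\beta_j}$. This gives
\[
\partial_{x_j}\bigl(x_1^{\beta_1}\cdots x_n^{\beta_n}\bigr)=\beta_j\, x_1^{\beta_1}\cdots x_j^{\beta_j-1}\cdots x_n^{\beta_n},
\]
with the convention that if $\beta_j=0$ the whole term vanishes, which is consistent with the factor $\beta_j=0$ appearing in the formula. Assembling the $n$ components yields the gradient of the single monomial $x^\beta$ exactly as stated.

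Finally, I would sum over all multi-indices $\beta$ with $|\beta|=k$, weighted by $b_\beta$, to obtain the expression displayed in the lemma. There is no real obstacle here: the only subtlety worth flagging is the bookkeeping for components with $\beta_j=0$, so that the formula is literally meaningful as written. The homogeneity degree $k$ plays no role in the computation itself; it is merely a constraint on which monomials appear in the sum.
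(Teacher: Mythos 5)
Your proof is correct and is exactly the "straightforward computation" the paper alludes to: linearity reduces to a single monomial, and the one-variable power rule applied coordinatewise gives each component, with the $\beta_j=0$ convention handling the vanishing terms. No further comment is needed.
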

	The proof is a straightforward computation.\\
	Therefore, we are ready to rewrite $\mathcal{I}_u$ using \eqref{expansion-in-homog-polyn-harm-funct} and Lemma \ref{lemma-grad-homog-polyn} as well. Hence, from \eqref{expansion-in-homog-polyn-harm-funct} we have
	\begin{equation}\label{rewriting-J_u-1}
		\mathcal{I}_{u}(r)=\frac{1}{r^2}\int_{B_r}\frac{|\nabla( \sum_{k=0}^{\infty}P_{k}(x))|^2}{|x|^{n-2}}\hspace{0.05cm}dx=\frac{1}{r^2}\int_{B_r}\frac{|\sum_{k=1}^{\infty}\nabla P_{k}(x)|^2}{|x|^{n-2}}\hspace{0.05cm}dx.
	\end{equation}
	We apply the classical coarea formula, see \cite{Federer}, and performing the change of variables in spherical coordinates to get
	\begin{align*}
		&\mathcal{I}_{u}(r)=\frac{1}{r^2}\int^r_0\bigg(\int_{\partial B_t}\frac{|\sum_{k=1}^{\infty}(\nabla P_{k})(\eta)|^2}{t^{n-2}}\hspace{0.05cm}d\eta\bigg)dt\\
		&=\frac{1}{r^2}\int^r_0\bigg(\int_{\partial B_1}\frac{|\sum_{k=1}^{\infty}(\nabla P_{k})(t\sigma)|^2}{t^{n-2}}t^{n-1}\hspace{0.05cm}d\sigma\bigg)dt\\
		&=\frac{1}{r^2}\int^r_0t\bigg(\int_{\partial B_1}\bigg(\sum_{k=1}^{\infty}|(\nabla P_{k})(t\sigma)|^2\bigg)d\sigma\bigg)dt\\
		&+\frac{1}{r^2}\int^r_0t\bigg(\int_{\partial B_1}\bigg(\sum_{\stackrel{h,k=1}{h\neq k}}^{\infty}\langle(\nabla P_{h})(t\sigma),(\nabla P_{k})(t\sigma)\rangle\bigg)d\sigma\bigg)dt,
	\end{align*}
	which yields
	\begin{equation}\label{rewriting-J_u-2}
		\begin{split}
			&\mathcal{I}_{u}(r)=\frac{1}{r^2}\int^r_0t\bigg(\sum_{k=1}^{\infty}\int_{\partial B_1}|(\nabla P_{k})(t\sigma)|^2d\sigma\bigg)dt\\
			&+\frac{1}{r^2}\int^r_0t\bigg(\sum_{\stackrel{h,k=1}{h\neq k}}^{\infty}\int_{\partial B_1}\langle(\nabla P_{h})(t\sigma),(\nabla P_{k})(t\sigma)\rangle\hspace{0.05cm}d\sigma\bigg)dt.
		\end{split}
	\end{equation}
	Now, exploiting Lemma \ref{lemma-grad-homog-polyn}, we point out that $\nabla P_k$ has each component which is a homogeneous polynomial of degree $k-1.$ Thus, it holds
	\[(\nabla P_{k})(t\sigma)=t^{k-1}(\nabla P_{k})(\sigma),\]
	which entails, according to \eqref{rewriting-J_u-2},
	\begin{align*}
		&\mathcal{I}_{u}(r)=\frac{1}{r^2}\int^r_0t\bigg(\sum_{k=1}^{\infty}t^{2k-2}\int_{\partial B_1}|(\nabla P_{k})(\sigma)|^2d\sigma\bigg)dt\\
		&+\frac{1}{r^2}\int^r_0t\bigg(\sum_{\stackrel{h,k=1}{h\neq k}}^{\infty}t^{h+k-2}\int_{\partial B_1}\langle(\nabla P_{h})(\sigma),(\nabla P_{k})(\sigma)\rangle\hspace{0.05cm}d\sigma\bigg)dt\\
		&=\frac{1}{r^2}\int^r_0\bigg(\sum_{k=1}^{\infty}t^{2k-1}\int_{\partial B_1}|(\nabla P_{k})(\sigma)|^2\bigg)d\sigma\hspace{0.05cm}dt\\
		&=\frac{1}{r^2}\sum_{k=1}^{\infty}\int_{\partial B_1}|(\nabla P_{k})(\sigma)|^2\bigg(\int^r_0t^{2k-1}\hspace{0.05cm}dt\bigg)d\sigma\\
		&=\frac{1}{r^2}\sum_{k=1}^{\infty}\int_{\partial B_1}|(\nabla P_{k})(\sigma)|^2\frac{r^{2k}}{2k}d\sigma=\sum_{k=1}^{\infty}a_kr^{2(k-1)},
	\end{align*}
	where
	\begin{equation}\label{defin-a_k-funct-J_u}
		a_k\coloneqq \frac{1}{2k}\int_{\partial B_1}|(\nabla P_{k})(\sigma)|^2\hspace{0.05cm}d\sigma,
	\end{equation}
	and and having used the fact that
	\begin{equation}\label{orthogonality-grad-homog-harm-polyn}
		\int_{\partial B_1}\langle(\nabla P_{h})(\sigma),(\nabla P_{k})(\sigma)\rangle\hspace{0.05cm}d\sigma=0,\quad h\neq k.
	\end{equation}
	To recap, we have showed that
	\begin{equation}\label{rewriting-J_u-final}
		\mathcal{I}_{u}(r)=\sum_{k=1}^{\infty}a_kr^{2(k-1)}.
	\end{equation}
	Hence, we immediately obtain the following result.
	\begin{prop}\label{prop-limit-r-tend-to-0-monotonic-J_u}
		Let $\mathcal{I}_u$ be as in \eqref{functional-harm-funct}. Then $\mathcal{I}_u$ is monotone increasing with respect to $r$ and $\lim\limits_{r\rightarrow 0}\mathcal{I}_u(r)=a_1,$ with $a_1$ defined in \eqref{defin-a_k-funct-J_u}.
	\end{prop}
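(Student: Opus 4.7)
The plan is to read off both conclusions directly from the series representation \eqref{rewriting-J_u-final} that was just derived, namely
\[
\mathcal{I}_{u}(r)=\sum_{k=1}^{\infty}a_{k}\,r^{2(k-1)},\qquad a_{k}=\frac{1}{2k}\int_{\partial B_{1}}|(\nabla P_{k})(\sigma)|^{2}\,d\sigma.
\]
The first thing I would point out is the non-negativity of the coefficients: since $a_{k}$ is the integral of the non-negative function $|\nabla P_{k}|^{2}$ divided by $2k>0$, one has $a_{k}\ge 0$ for every $k\ge 1$.

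From this non-negativity the monotonicity is immediate. Term by term one has $\frac{d}{dr}\bigl(a_{k}r^{2(k-1)}\bigr)=2(k-1)a_{k}r^{2k-3}\ge 0$ for $k\ge 2$ and identically zero for $k=1$; summing, $\mathcal{I}_{u}$ is non-decreasing in $r$. (For $r$ small enough that the Taylor series of $u$ converges on $B_{r}$ and the double integrals converge, the termwise differentiation is justified because all terms are non-negative, so monotone convergence applies to the partial sums.)

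For the limit as $r\to 0^{+}$, I would separate the $k=1$ term from the rest, writing
\[
\mathcal{I}_{u}(r)=a_{1}+\sum_{k=2}^{\infty}a_{k}\,r^{2(k-1)},
\]
and observe that the second sum is bounded above by a convergent tail $r^{2}\sum_{k\ge 2}a_{k}r^{2(k-2)}$, which tends to $0$ as $r\to 0^{+}$. Hence $\lim_{r\to 0^{+}}\mathcal{I}_{u}(r)=a_{1}$, which matches the defining formula \eqref{defin-a_k-funct-J_u}.

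There is essentially no obstacle here beyond the routine justification of the interchange of limit/derivative with the infinite sum; given the analyticity of the harmonic function $u$ near $0$ and the non-negativity of the integrands, both interchanges are standard. The only thing worth flagging explicitly in a written proof is the use of the orthogonality relation \eqref{orthogonality-grad-homog-harm-polyn}, which is what allowed the cross terms to drop out in passing from \eqref{rewriting-J_u-2} to \eqref{rewriting-J_u-final} and is precisely the Euclidean ingredient whose failure in $\mathbb{H}^{1}$ will later produce the counterexample.
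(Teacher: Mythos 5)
Your argument is correct and is essentially identical to the paper's proof: both read the two conclusions directly off the representation $\mathcal{I}_u(r)=\sum_{k\ge 1}a_k r^{2(k-1)}$, using the nonnegativity of the $a_k$ from \eqref{defin-a_k-funct-J_u} for the monotonicity and letting $r\to 0$ for the limit. Your extra remarks on justifying the termwise differentiation and on the role of \eqref{orthogonality-grad-homog-harm-polyn} are sensible but do not change the route.
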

	\begin{proof}
		The proof follows by two straightforward computations, using \eqref{rewriting-J_u-final}. Precisely, about the first one, letting $r$ tend to $0$ in \eqref{rewriting-J_u-final}, we obtain
		\[\lim\limits_{r\rightarrow 0}\mathcal{I}_{u}(r)=\lim\limits_{r\rightarrow 0}\bigg(\sum_{k=1}^{\infty}a_kr^{2(k-1)}\bigg)=a_1.\]
		Concerning the second one, if we compute the first derivative of $\mathcal{I}_u$ directly from \eqref{rewriting-J_u-final}, we get
		\[\mathcal{I}_{u}'(r)=\bigg(\sum_{k=1}^{\infty}a_kr^{2(k-1)}\bigg)'=\sum_{k=2}^{\infty}2(k-1)a_kr^{2k-3}\ge 0\]
		because the $a_k$'s are nonnegative by virtue of \eqref{defin-a_k-funct-J_u}, and hence $\mathcal{I}_{u}$ is indeed monotone increasing.	
	\end{proof}
	Let us discuss that $\lim\limits_{r\rightarrow 0}\mathcal{I}_{u}(r)=a_1.$ By definition, we have 
	\[a_1=\frac{1}{2}\int_{\partial B_1}|\nabla P_1(\sigma)|^2d\sigma.\]
	We then recall that $P_1$ is the homogeneous polynomial of degree $1$ coming from the Taylor expansion of $u$ at $0.$ As a consequence, according to Lemma \ref{lemma-grad-homog-polyn}, $\nabla P_1$ is a constant. Specifically, it is the gradient of $u$ at $0.$ So, it follows
	\[a_1=\frac{|\nabla u(0)|^2|B_1|}{2}.\]
	This equality tells us that the limit of $\mathcal{I}_u$ with $r$ tending to $0$ is strictly positive depending on whether the gradient of $u$ vanishes in $0$ or not.\\
	To conclude this argument in the Euclidean case, we show a proof of \eqref{orthogonality-grad-homog-harm-polyn}. Precisely, we recall the following result, see \cite{Mul}.
	\begin{lem}\label{lemma-orthogonality-homog-harm-polyn}
		Let $P_h$ and $P_k$ be two harmonic homogeneous polynomials in $B_1$ of degree $h$ and $k$ respectively, with $h\neq k.$ Then, it holds
		\begin{equation}\label{condition-orthogon-homog-harm-polyn}
			\int_{\partial B_1}P_hP_k\hspace{0.1cm}d\sigma=0.
		\end{equation}
	\end{lem}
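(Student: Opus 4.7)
The plan is to use Green's second identity together with Euler's homogeneity relation; this is the standard argument and I would follow it rather than try anything more exotic.

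First, I would apply Green's second identity to $P_h$ and $P_k$ on the unit ball $B_1$. Since both polynomials are harmonic on all of $\mathbb{R}^n$ (in particular on $B_1$ and up to its boundary), the bulk term vanishes identically and one is left with
\begin{equation*}
\int_{\partial B_1}\bigl(P_h\,\partial_\nu P_k - P_k\,\partial_\nu P_h\bigr)\,d\sigma = 0,
\end{equation*}
where $\nu$ is the outward unit normal on $\partial B_1$.

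Next, I would exploit the homogeneity. Since $P_m$ is homogeneous of degree $m$, Euler's identity gives $x\cdot\nabla P_m(x) = m\,P_m(x)$. On $\partial B_1$ the outward unit normal at the point $\sigma$ is $\nu=\sigma$ itself, so $\partial_\nu P_m(\sigma) = \sigma\cdot\nabla P_m(\sigma) = m\,P_m(\sigma)$. Substituting this for both $h$ and $k$ in the boundary integral above yields
\begin{equation*}
(k-h)\int_{\partial B_1} P_h\,P_k\,d\sigma = 0,
\end{equation*}
and since $h\neq k$ we conclude $\int_{\partial B_1} P_h P_k\,d\sigma = 0$, which is exactly \eqref{condition-orthogon-homog-harm-polyn}.

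There is really no main obstacle here: both ingredients (Green's identity and Euler's relation) are elementary, and the only subtlety is the observation that on the unit sphere the normal derivative coincides with the radial derivative, which in turn reads off the degree of homogeneity. The same two-line argument is of course what breaks down in the Heisenberg setting, since the analogue of Euler's identity for $\Delta_{\mathbb{H}^1}$-harmonic polynomials does not produce eigenfunctions of the relevant boundary operator with distinct eigenvalues indexed only by the degree; this is precisely the lack of orthogonality invoked in the introduction to explain why the counterexample in Theorem \ref{maintheorem} can exist.
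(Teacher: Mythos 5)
Your proof is correct and follows essentially the same route as the paper: the paper's use of the divergence theorem on $P_h\nabla P_k-P_k\nabla P_h$ is exactly Green's second identity, and its computation $\langle\nabla P_k,x\rangle=kP_k$ via Lemma \ref{lemma-grad-homog-polyn} is precisely the Euler homogeneity relation you invoke. The only cosmetic difference is that the paper derives Euler's identity by an explicit monomial calculation rather than quoting it.
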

	\begin{proof}
		The proof is a direct consequence of the divergence theorem in the Euclidean setting and Lemma \ref{lemma-grad-homog-polyn}. Indeed, since $P_h$ and $P_k$ are both harmonic in $B_1,$ we achieve from the divergence theorem
		\begin{equation}\label{orthogonality-harm-homog-polyn-diff-degrees-1}
			\begin{split}
				&0=\int_{B_1}(P_h\Delta P_k-P_k\Delta P_h)\hspace{0.05cm}dx=\int_{B_1}\diver(P_h\nabla P_k-P_k\nabla P_h)\hspace{0.05cm}dx\\
				&=\int_{\partial B_1}\langle P_h\nabla P_k-P_k\nabla P_h,x\rangle\hspace{0.05cm}d\sigma(x)=\int_{\partial B_1}(P_h\langle\nabla P_k,x\rangle-P_k\langle\nabla P_h,x\rangle)\hspace{0.05cm}d\sigma(x),
			\end{split}
		\end{equation}
		because $x$ is the outward normal to $\partial B_1.$\\
		Now, we can exploit Lemma \ref{lemma-grad-homog-polyn} to get an expression of $\langle\nabla P_k,x\rangle$ for any $k.$ Specifically, we have
		\begin{align}\label{scalar-product-grad-homog-polyn-x}
			&\langle\nabla P_k,x\rangle=\sum_{\beta,\hspace{0.1cm}|\beta|=k}b_{\beta}(\beta_1x_1^{\beta_1}x_2^{\beta_2}\cdots x_n^{\beta_n}+\ldots+\beta_n x_1^{\beta_1}x_2^{\beta_2}\cdots x_n^{\beta_n})\\
			&=\sum_{\beta,\hspace{0.1cm}|\beta|=k}b_{\beta}(\beta_1+\cdots+\beta_n)x_1^{\beta_1}x_2^{\beta_2}\cdots x_n^{\beta_n}=\sum_{\beta,\hspace{0.1cm}|\beta|=k}b_{\beta}|\beta|x^{\beta}=kP_k.
		\end{align}
		Then, exploiting \eqref{scalar-product-grad-homog-polyn-x}, we obtain from \eqref{orthogonality-harm-homog-polyn-diff-degrees-1}
		\begin{align*}
			&0=\int_{\partial B_1}(P_h\langle\nabla P_k,x\rangle-P_k\langle\nabla P_h,x\rangle)\hspace{0.05cm}d\sigma(x)=\int_{\partial B_1}(P_hkP_k-P_kh P_h)\hspace{0.05cm}d\sigma(x),
		\end{align*}
		which gives
		\[\int_{\partial B_1}P_hP_k\hspace{0.05cm}d\sigma=0,\]
		because $k\neq h.$
	\end{proof}
	It is now an immediate consequence to achieve \eqref{orthogonality-grad-homog-harm-polyn}, simply observing that, according to Lemma \ref{lemma-grad-homog-polyn}, each component of the gradient of a homogeneous polynomial is still a homogeneous polynomial.
	\section{The Heisenberg setting}\label{Heisenberg_setting}

	Following the idea introduced in Section \ref{Euclidean_case}, we face the same problem in the framework of $\mathbb{H}^1.$ 
	 For the sake of simplicity, we restrict ourselves to the $\mathbb{H}^1$ case, nevertheless the argument holds in $\mathbb{H}^n$ as well. 
	
	We recall here that
	$\mathbb{H}^n$ denotes the set $\mathbb{R}^{2n+1},$ $n\in \mathbb{N},$ $n\geq 1,$ 
	endowed with the noncommutative inner law in such  a way that for every $P\equiv (x_1,y_1,t_1)\in \mathbb{R}^{2n+1},$ $M\equiv (x_2,y_2,t_2)\in \mathbb{R}^{2n+1},$ $x_{i}\in \mathbb{R}^n,$ $y_{i}\in \mathbb{R}^n,$ $i=1,2$ it holds: 
	$$
	P\circ M:=(x_1+x_2,y_1+y_2,t_1+t_2+2(\langle x_2, y_1\rangle- \langle x_1, y_2\rangle)),
	$$
	where $\langle \cdot, \cdot\rangle$ denotes the usual inner product in $\mathbb{R}^n.$

 Let $X_i=(e_i,0,2y_i)$ and $Y_i=(0,e_i,-2x_i),$ $i=1,\dots,n,$ where $\{e_i\}_{1\leq i\leq n}$ is the canonical basis for $\mathbb{R}^n.$ 
The inverse of $P:=(x,y,t)\not=0$ is $(-x,-y,-t)$ and it is denoted by $P^{-1}.$
	
	We use the same symbols to denote the vector fields associated with the previous vectors, so that for $i=1,\dots,n,$ we have:
	$$
	X_i:=\partial_{x_i}+2y_i\partial_t,\quad
	Y_i:=\partial_{y_i}-2x_i\partial_t.
	$$
	The commutator between the vector fields is
	$$
	[X_i,Y_i]:=X_iY_i-Y_iX_1=-4\partial_t,\quad i=1,\ldots,n,
	$$
	otherwise is $0.$ 
	The intrinsic gradient of a real valued smooth function $u$ in a point $P$ is  
	$$
	\nabla_{\mathbb{H}^n}u(P):=\sum_{i=1}^n(X_iu(P)X_i(P)+Y_iu(P)Y_i(P)).
	$$
	
	Now, there exists a unique metric on 
	$$H\mathbb{H}^n_P=\mbox{span}\{X_1(P),\dots,X_n(P),Y_1(P),\dots,Y_n(P)\}$$ which makes orthonormal the set of vectors $\{X_1,\dots,X_n,Y_1,\dots,Y_n\}.$ Thus, for every $P\in \mathbb{H}^n$ and for every $U,W\in H\mathbb{H}^n_P,$ $$U=\sum_{j=1}^n(\alpha_{1,j}X_{j}(P)+\beta_{1,j}Y_j(P)),$$
	$$V=\sum_{j=1}^n(\alpha_{2,j}X_{j}(P)+\beta_{2,j}Y_j(P)),$$ it holds
	$$
	\langle U,V\rangle=\sum_{j=1}^n(\alpha_{1,j}\alpha_{2,j}+\beta_{1,j}\beta_{2,j}).
	$$  
	Since we mainly work on $\mathbb{H}^1,$ that is to the case in which $n=1,$ we simply continue introducing the remaining notation in $\mathbb{H}^1.$
	In particular, we define a norm associated with the metric on the space $\mbox{span}\{X,Y\},$ as it follows:
	$$
	\mid U\mid=\sqrt{\sum_{j=1}^1\left(\alpha_{1,j}^2+\beta_{1,j}^2\right)}=\sqrt{\alpha_{1,1}^2+\beta_{1,1}^2}.
	$$
	For example, the norm of the intrinsic gradient of a smooth function $u$ in $P$ is
	$$
	\mid \nabla_{\mathbb{H}^1} u(P)\mid=\sqrt{(Xu(P))^2+(Yu(P))^2}.
	$$
	Moreover, if $\nabla_{\mathbb{H}^1} u(P)\not=0,$ then
	$$
	\left|\frac{\nabla_{\mathbb{H}^1}u(P)}{\mid \nabla_{\mathbb{H}^1}u(P)\mid}\right|=1.
	$$
	
	If $\nabla_{\mathbb{H}^1} u(P)=0,$ instead, we say that the point $P$ is characteristic for the smooth surface $\{u=u(P)\}.$
	In particular, for every point $M\in \{u=u(P)\},$ which is not characteristic, it is well defined the intrinsic normal to the surface $\{u=u(P)\}$ as follows: 
	$$
	\nu(M)=\frac{\nabla_{\mathbb{H}^1} u(M)}{\mid \nabla_{\mathbb{H}^1} u(M)\mid}.
	$$
	The Kohn-Laplace operator is 
	$$\Delta_{\mathbb{H}^1}=X^2+Y^2,$$ 
	where
	it results
	\begin{equation}
		\Delta_{\mathbb{H}^1}=\frac{\partial^2}{\partial x^2}+\frac{\partial^2}{\partial y^2}+2y\frac{\partial^2}{\partial x\partial t}-2x\frac{\partial^2}{\partial y\partial t}+4(x^2+y^2)\frac{\partial^2}{\partial t^2},
	\end{equation} 
	so that $\Delta_{\mathbb{H}^1}$ is a degenerate elliptic operator because the smallest eigenvalue associated with the matrix
	\begin{equation*}
		\left(
		\begin{array}{ccc}
			1,&0,&2y\\
			0,&1,&-2x\\
			2y,&-2x,&4(x^2+y^2)
		\end{array}\right)
	\end{equation*}
	is always $0.$ 
	
	At this point, we introduce in the Heisenberg group $\mathbb{H}^1$ the Koranyi norm of $P\equiv (x,y,t)\in\mathbb{H}^1$ as
	$$
	| (x,y,t)|_{\mathbb{H}^1}:=\sqrt[4]{( x^2+y^2)^2+t^2}.
	$$
	In particular, for every positive number $r,$ the gauge ball of radius $r$ centered at $0$ is
	$$
	B^{\mathbb{H}^1}_r(0):=\{P\in \mathbb{H}^1 :\:\:|P|_{\mathbb{H}^1}<r\}.
	$$
	It is worth to say that this structure is endowed by suitable properties, like the left invariance with respect to the inner law.
	More precisely, for every point $P\in \mathbb{H}^1$
	\begin{equation}\label{invariant1}
		P\circ B^{\mathbb{H}^1}_r(0)=B^{\mathbb{H}^1}_r(P)=\{S\in \mathbb{H}^1 :\:\:|P^{-1}\circ S|_{\mathbb{H}^1}<r\}
	\end{equation} 
	and for every $P\in \mathbb{H}^1$ it results
	$$
	\mbox{meas}_{3}(B^{\mathbb{H}^1}_r(0))=\mbox{meas}_{3}(B^{\mathbb{H}^1}_r(P)),
	$$
	where $\mbox{meas}_{3}$ denotes the usual Lebesgue measure in $\mathbb{R}^3.$
	
	Moreover, if $u$ is a $C^1$ function in $\mathbb{H}^1$ and for every $P\in \mathbb{H}^1$ if $v(S):=u(P\circ S)$ then
	\begin{equation}\label{invariant2}
		Xv(S)=Xu(P\circ S),\:\:Yv(S)=Yu(P\circ S),\:\: \Delta_{\mathbb{H}^1}v(S)=\Delta_{\mathbb{H}^1}u(P\circ S).
	\end{equation} 
	
	In addition, a dilation semigroup is defined as follows: for every $r>0$ and for every $P\equiv (x,y,t)\in \mathbb{H}^1,$ let
	\begin{equation}\label{dilation}\begin{split}
			\delta_r(P):=(rx,ry,r^2t).
		\end{split}
	\end{equation}
	As a consequence,  denoting $u_r(S):=u(\delta_r(S)),$  it results that
	\begin{equation*}\begin{split}
			Xu_r(S)=r(Xu)(\delta_r(S)),\:\:Yu_r(S)=r(Yu)(\delta_r(S)),
		\end{split}
	\end{equation*}
	and
	\begin{equation*}\begin{split}
			\Delta_{\mathbb{H}^1}u_r(S)=r^2(\Delta_{\mathbb{H}^1}u)(\delta_r(S)).
		\end{split}
	\end{equation*}	
	The details of all previous properties can be found in \cite{Stein}, or in other handbooks like \cite{CDPT} or  \cite{BLU}. 
	Moreover, the following
representation theorem holds, see  \cite{capdangar}, \cite{FSSChouston} and \cite{Magnani} for further developments.

\begin{prop}\label{perimetro regolare}
If $E$ is a $\mathbb{H}^{1}:=\mathbb{R}^{3}$-Caccioppoli set with Euclidean ${\mathbf C}^1$
boundary, then there is an explicit representation of the
$\mathbb{H}^{1}$-perimeter in terms of the Euclidean $2$-dimensional
Hausdorff measure $\mathcal H^{2}$
\begin{equation*}
P_{\mathbb{H}^1}^{\Omega, E}(\partial E)=\int_{\partial
E\cap\Omega}\sqrt{\langle
X,n_E\rangle_{\mathbb{R}^{3}}^2+\langle
Y,n_E\rangle_{\mathbb{R}^{3}}^2}d{\mathcal {H}}^{2},
\end{equation*}
where $n_E=n_E(x)$ is the Euclidean unit outward normal to $\partial
E$.
\end{prop}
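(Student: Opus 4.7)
The plan is to prove the representation formula by comparing the $\mathbb{H}^1$-perimeter, defined intrinsically as a supremum over horizontal test vector fields, with the Euclidean flux across $\partial E$ of the same field viewed as an $\mathbb{R}^3$-field. The key observation is that the vector fields $X=(1,0,2y)$ and $Y=(0,1,-2x)$ are individually Euclidean divergence free, so the horizontal divergence of $\phi = \phi_1 X + \phi_2 Y$ coincides with the Euclidean divergence of the $\mathbb{R}^3$-field $(\phi_1, \phi_2, 2y\phi_1 - 2x\phi_2)$. This reduces the problem to the classical Euclidean divergence theorem plus a pointwise optimization.

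First I would recall the intrinsic definition
\[
P_{\mathbb{H}^1}^{\Omega,E}(\partial E) = \sup\Bigl\{\int_E (X\phi_1 + Y\phi_2)\, d\xi \;:\; \phi=\phi_1 X + \phi_2 Y \in C^1_c(\Omega, H\mathbb{H}^1),\ \phi_1^2+\phi_2^2\le 1\Bigr\},
\]
and then expand $X\phi_1+Y\phi_2$ into Euclidean partial derivatives using $X=\partial_x+2y\partial_t$, $Y=\partial_y-2x\partial_t$, checking by direct computation that it equals $\mathrm{div}_{\mathbb{R}^3}(\phi_1,\phi_2,2y\phi_1-2x\phi_2)$. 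Since $\partial E$ is Euclidean $C^1$, the classical divergence theorem applies and yields
\[
\int_E (X\phi_1 + Y\phi_2)\, d\xi = \int_{\partial E\cap\Omega} \bigl(\phi_1\langle X,n_E\rangle_{\mathbb{R}^3} + \phi_2\langle Y,n_E\rangle_{\mathbb{R}^3}\bigr)\, d\mathcal{H}^2,
\]
for every admissible $\phi$, where the boundary term is supported in $\Omega$ thanks to compact support of $\phi$.

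Next I would perform the pointwise maximization: for fixed $\xi\in\partial E\cap\Omega$, the Cauchy--Schwarz inequality in $\mathbb{R}^2$ gives
\[
\phi_1(\xi)\langle X,n_E\rangle_{\mathbb{R}^3} + \phi_2(\xi)\langle Y,n_E\rangle_{\mathbb{R}^3} \le \sqrt{\langle X,n_E\rangle_{\mathbb{R}^3}^2 + \langle Y,n_E\rangle_{\mathbb{R}^3}^2},
\]
whenever $\phi_1^2+\phi_2^2\le 1$, and equality holds at the pair $(\phi_1^*,\phi_2^*)$ obtained by normalizing $(\langle X,n_E\rangle_{\mathbb{R}^3},\langle Y,n_E\rangle_{\mathbb{R}^3})$. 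Integrating against $\mathcal{H}^2$ on $\partial E\cap\Omega$ gives the ``$\le$'' inequality in the claimed formula. To get the reverse inequality, I would approximate the pointwise-optimal pair $(\phi_1^*,\phi_2^*)$, which is defined $\mathcal{H}^2$-a.e.\ on $\partial E\cap\Omega$, by a sequence of $C^1_c(\Omega,H\mathbb{H}^1)$ vector fields with $\phi_1^2+\phi_2^2\le 1$: one extends the field to a Euclidean tubular neighborhood of $\partial E\cap\Omega$ via the $C^1$-structure of $\partial E$, multiplies by a cutoff exhausting $\Omega$, and mollifies. Passing to the limit in the boundary integral by dominated convergence recovers the full integral of $\sqrt{\langle X,n_E\rangle^2+\langle Y,n_E\rangle^2}$ on $\partial E\cap\Omega$, completing the identification.

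The main obstacle will be the approximation step: the optimal horizontal unit field may degenerate at characteristic points, where $\langle X,n_E\rangle^2+\langle Y,n_E\rangle^2$ vanishes and the normalizing factor is singular. I would handle this by noting that the characteristic set has $\mathcal{H}^2$-measure zero on a $C^1$ surface (a classical fact used in \cite{capdangar,FSSChouston,Magnani}), and by approximating $(\phi_1^*,\phi_2^*)$ with a field that is zero in a shrinking neighborhood of the characteristic locus, using the fact that the integrand $\sqrt{\langle X,n_E\rangle^2+\langle Y,n_E\rangle^2}$ is bounded and integrable against $\mathcal{H}^2$. The rest of the argument is, modulo this technicality, a straightforward combination of the Euclidean divergence theorem with a pointwise Cauchy--Schwarz maximization.
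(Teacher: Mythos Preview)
Your argument is correct and follows the standard route found in the references the paper cites. Note, however, that the paper itself does \emph{not} prove this proposition: it is stated as a known representation theorem with the attribution ``see \cite{capdangar}, \cite{FSSChouston} and \cite{Magnani} for further developments,'' and no proof is given in the text. So there is no ``paper's own proof'' to compare against; your sketch is essentially the argument one finds in those sources (reduce the horizontal divergence to a Euclidean divergence, apply the classical divergence theorem on the $C^1$ boundary, and optimize pointwise via Cauchy--Schwarz, with the approximation near the characteristic set handled as you describe).
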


We also have the following intrinsic divergence theorem:
\begin{prop}\label{divergence}
If $E$ is a regular bounded open set with Euclidean ${\mathbf C}^1$
boundary and $\phi$ is a horizontal vector field,
continuously differentiable on $ \overline{\Omega} $, then
$$
\int_E \mathrm{div}_{\mathbb{H}^1}\ \phi\, dx = \int_{\partial E} \langle \phi, \nu_{\mathbb{H}^1}\rangle d P_{\mathbb{H}^1}^{E},
$$
where $\nu_{\mathbb{H}^1}(x)$ is the intrinsic horizontal unit outward normal to $\partial E$,
given by the (normalized) projection of $n_E(x)$ on the fiber $H\mathbb{H}^1_x$ of
the horizontal fiber bundle $H\mathbb{H}^1$.
\end{prop}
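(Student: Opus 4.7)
The plan is to reduce the intrinsic divergence theorem to the classical Euclidean one via the explicit representation of $X$ and $Y$ as vector fields in $\mathbb{R}^3$, and then match the boundary term using Proposition \ref{perimetro regolare}. Writing $\phi = \phi_1 X + \phi_2 Y$, I would first introduce the associated Euclidean vector field
\begin{equation*}
\Phi(x,y,t) := \phi_1(x,y,t)\,(1,0,2y) + \phi_2(x,y,t)\,(0,1,-2x) = (\phi_1,\,\phi_2,\,2y\phi_1-2x\phi_2),
\end{equation*}
which is of class $C^1(\overline{E};\mathbb{R}^3)$ since $\phi_1,\phi_2$ are. A direct differentiation using $X=\partial_x+2y\partial_t$ and $Y=\partial_y-2x\partial_t$ gives
\begin{equation*}
\mathrm{div}_{\mathbb{R}^3}\Phi = \partial_x\phi_1 + \partial_y\phi_2 + 2y\,\partial_t\phi_1 - 2x\,\partial_t\phi_2 = X\phi_1 + Y\phi_2 = \mathrm{div}_{\mathbb{H}^1}\phi,
\end{equation*}
so the two divergences coincide pointwise. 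This is the key identification; everything else is bookkeeping.

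Next I apply the classical Euclidean divergence theorem (valid by the $C^1$ regularity of $\partial E$) to $\Phi$, obtaining
\begin{equation*}
\int_E \mathrm{div}_{\mathbb{H}^1}\phi\,dx = \int_E \mathrm{div}_{\mathbb{R}^3}\Phi\,dx = \int_{\partial E} \langle \Phi, n_E\rangle_{\mathbb{R}^3}\,d\mathcal{H}^2,
\end{equation*}
and since $\Phi$ is literally $\phi_1 X + \phi_2 Y$ read as an $\mathbb{R}^3$-vector, the integrand on the boundary equals $\phi_1\langle X,n_E\rangle_{\mathbb{R}^3}+\phi_2\langle Y,n_E\rangle_{\mathbb{R}^3}$.

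It remains to identify this Euclidean boundary integral with the intrinsic one. By hypothesis, $\nu_{\mathbb{H}^1}$ is the normalized projection of $n_E$ onto $H\mathbb{H}^1_x=\mathrm{span}\{X,Y\}$, that is,
\begin{equation*}
\nu_{\mathbb{H}^1}(x) = \frac{\langle X,n_E\rangle_{\mathbb{R}^3}\,X + \langle Y,n_E\rangle_{\mathbb{R}^3}\,Y}{\sqrt{\langle X,n_E\rangle_{\mathbb{R}^3}^{\,2}+\langle Y,n_E\rangle_{\mathbb{R}^3}^{\,2}}},
\end{equation*}
at every non-characteristic point of $\partial E$; using the intrinsic inner product making $\{X,Y\}$ orthonormal, the integrand $\langle \phi,\nu_{\mathbb{H}^1}\rangle$ equals $(\phi_1\langle X,n_E\rangle+\phi_2\langle Y,n_E\rangle)$ divided by the same square root. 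Combining this with the perimeter representation from Proposition \ref{perimetro regolare}, the normalization factors cancel and
\begin{equation*}
\int_{\partial E} \langle \phi,\nu_{\mathbb{H}^1}\rangle\,dP_{\mathbb{H}^1}^{E} = \int_{\partial E}\bigl(\phi_1\langle X,n_E\rangle_{\mathbb{R}^3} + \phi_2\langle Y,n_E\rangle_{\mathbb{R}^3}\bigr)\,d\mathcal{H}^2,
\end{equation*}
matching exactly the Euclidean boundary integral above.

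The only genuine subtlety is the treatment of the characteristic set $\Sigma(E)=\{x\in\partial E : H\mathbb{H}^1_x\subset T_x\partial E\}$, where $\nu_{\mathbb{H}^1}$ is not defined by the stated formula and the denominator vanishes; however $\Sigma(E)$ has $\mathcal{H}^2$-measure zero on a Euclidean $C^1$ boundary (a classical fact, e.g. Balogh's theorem), and it is also $P_{\mathbb{H}^1}^{E}$-negligible, so the cancellation above is justified almost everywhere with respect to both measures and the equality extends. This is the only point at which one must appeal to something beyond the linear-algebraic identification; the rest is purely a consistency check between the Euclidean divergence theorem and the representation formula for the $\mathbb{H}^1$-perimeter.
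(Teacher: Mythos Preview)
Your argument is correct and is in fact the standard derivation of the intrinsic divergence theorem in $\mathbb{H}^1$: identify the horizontal field with a Euclidean $\mathbb{R}^3$-vector field, observe that the Euclidean and sub-Riemannian divergences agree pointwise (because the coefficients $2y$ and $-2x$ do not depend on $t$), apply the classical divergence theorem, and convert the boundary term via the perimeter representation of Proposition~\ref{perimetro regolare}. The treatment of the characteristic set is also appropriate.

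Note, however, that the paper does not supply its own proof of this proposition: it is stated without proof in Section~\ref{Heisenberg_setting} as a known result drawn from the references cited just before Proposition~\ref{perimetro regolare} (\cite{capdangar}, \cite{FSSChouston}, \cite{Magnani}). So there is no ``paper's proof'' to compare against; your write-up simply supplies the standard argument that the cited literature contains.
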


	\section{Some properties of the orthogonal polynomials in the Heisenberg group}\label{harmonic_properties}
	In this section we develop our approach.\\
	Let us take the companion functional to \eqref{functional-harm-funct} in $\mathbb{H}^n,$ namely, for every $P\in \mathbb{H}^n,$
	\[
	\mathcal{I}_{u}^{\mathbb{H}^n}(r)\coloneqq \frac{1}{r^2}\int_{B^{\mathbb{H}^n}_r(P)}\frac{|\nabla_{\mathbb{H}^n} u|^2}{|P^{-1}\circ\xi|_{\mathbb{H}^n}^{Q-2}}\hspace{0.05cm}d\xi,\quad \xi=(x,y,t)\in \mathbb{H}^n,
	\]
	where $Q$ is $4$ in ${\mathbb{H}^1},$ and thus $\mathcal{I}_{u}^{\mathbb{H}^1}$ becomes
	\begin{equation}\label{functional-harm-funct-H^10}
		\mathcal{I}_{u}^{\mathbb{H}^1}(r)= \frac{1}{r^2}\int_{B^{\mathbb{H}^1}_r(P)}\frac{|\nabla_{\mathbb{H}^1} u|^2}{|P^{-1}\circ\xi|_{\mathbb{H}^1}^{2}}\hspace{0.05cm}d\xi.
	\end{equation}
	On the other hand, recalling the translation invariance properties \eqref{invariant1}, \eqref{invariant2},
	we obtain
	\begin{equation}\label{functional-harm-funct-H^1}
		\mathcal{I}_{u}^{\mathbb{H}^1}(r)= \frac{1}{r^2}\int_{B^{\mathbb{H}^1}_r(0)}\frac{|\nabla_{\mathbb{H}^1} u(P\circ \xi)|^2}{|\xi|_{\mathbb{H}^1}^{2}}\hspace{0.05cm}d\xi=\frac{1}{r^2}\int_{B^{\mathbb{H}^1}_r(0)}\frac{|\nabla_{\mathbb{H}^1} v(\xi)|^2}{|\xi|_{\mathbb{H}^1}^{2}}\hspace{0.05cm}d\xi,
	\end{equation}
	with $v(\xi)=u(P\circ \xi).$ 
	
	Hence, we can suppose to work, without restrictions, in the standard case in which our function $u$ is defined in a neighborhood of $0\in \mathbb{H}^1.$
	
	Following the parallelism with the Euclidean case, we assume that the function $u$ in $\mathcal{I}_{u}^{\mathbb{H}^1}$ is $\mathbb{H}^1$-harmonic, that is 
	$\Delta_{\mathbb{H}^1}u=0,$ 
	in an open set $\Omega \subset \mathbb{H}^1$ such that $0\in \Omega.$
	
	At this point, we want to rewrite \eqref{functional-harm-funct-H^1}, supposing that $r$ is small. Specifically, we have the following equality.
	\begin{prop}
		Let $\mathcal{I}_{u}^{\mathbb{H}^1}$ be defined in \eqref{functional-harm-funct-H^1}. It holds
		\begin{equation}\label{rewriting-J_u^H^1-2}
			\begin{split}
				&\mathcal{I}_{u}^{\mathbb{H}^1}(r)=\frac{1}{r^2}\int^r_0s\bigg(\sum_{k=1}^{\infty}\int_{\partial B^{\mathbb{H}^1}_1(0)}\frac{|(\nabla_{\mathbb{H}^1} P_{k})(\delta_s(\sigma))|^2}{\sqrt{\sigma_x^2+\sigma_y^2}}\hspace{0.05cm}d\sigma_{\mathbb{H}^1}\bigg)ds\\
				&+\frac{1}{r^2}\int^r_0s\bigg(\sum_{\stackrel{h,k=1}{h\neq k}}^{\infty}\int_{\partial B^{\mathbb{H}^1}_1(0)}\frac{\langle(\nabla_{\mathbb{H}^1} P_{h})(\delta_s(\sigma)),(\nabla_{\mathbb{H}^1} P_{k})(\delta_s(\sigma))\rangle}{\sqrt{\sigma_x^2+\sigma_y^2}}\hspace{0.05cm}d\sigma_{\mathbb{H}^1}\bigg)ds,
			\end{split}
		\end{equation}
		with $\sigma=(\sigma_x,\sigma_y,\sigma_t)\in \partial B^{\mathbb{H}^1}_1(0).$
	\end{prop}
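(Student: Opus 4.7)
The plan is to mirror the Euclidean argument of Section~\ref{Euclidean_case}, with the anisotropic dilations $\delta_s$ and the Koranyi gauge $\rho(\xi):=|\xi|_{\mathbb{H}^1}$ playing the role of Euclidean dilations and the Euclidean norm, and with the intrinsic perimeter measure of Proposition~\ref{perimetro regolare} replacing the Euclidean surface measure on the gauge sphere.

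First I would expand the $\mathbb{H}^1$-harmonic function $u$ as $u=\sum_{k\ge 0}P_k$, where each $P_k$ is $\mathbb{H}^1$-homogeneous of degree $k$, i.e.\ $P_k(\delta_s(\xi))=s^{k}P_k(\xi)$. This decomposition arises from grouping the usual Taylor monomials $x^ay^bt^c$ of $u$ around $0$ by the anisotropic weight $a+b+2c$; since $\Delta_{\mathbb{H}^1}$ lowers this weight by $2$, each $P_k$ is automatically $\Delta_{\mathbb{H}^1}$-harmonic. Note that $P_0$ is constant, so $\nabla_{\mathbb{H}^1}P_0=0$ and it drops out of the square of the horizontal gradient. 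Inserting the series into~\eqref{functional-harm-funct-H^1} and expanding yields
\[
|\nabla_{\mathbb{H}^1} u|^2=\sum_{k\ge 1}|\nabla_{\mathbb{H}^1} P_k|^2+\sum_{\stackrel{h,k\ge 1}{h\ne k}}\langle\nabla_{\mathbb{H}^1} P_h,\nabla_{\mathbb{H}^1} P_k\rangle,
\]
and the sums can be interchanged with the integral since, for $r$ small, the decomposition converges absolutely on compact subsets of the set where $u$ is smooth.

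Next I would slice $B^{\mathbb{H}^1}_r(0)$ by the level sets of $\rho$. Combining the classical Euclidean coarea formula for $\rho$ with the representation of $P_{\mathbb{H}^1}$ in Proposition~\ref{perimetro regolare} gives, for every summable $f$,
\[
\int_{B^{\mathbb{H}^1}_r(0)}f\,d\xi=\int_0^r\Bigl(\int_{\partial B^{\mathbb{H}^1}_s(0)}\frac{f}{|\nabla_{\mathbb{H}^1}\rho|}\,dP_{\mathbb{H}^1}\Bigr)\,ds,
\]
while applying $X=\partial_x+2y\partial_t$ and $Y=\partial_y-2x\partial_t$ to $\rho=((x^2+y^2)^2+t^2)^{1/4}$ yields the explicit identity
\[
|\nabla_{\mathbb{H}^1}\rho|(\xi)=\frac{\sqrt{x^2+y^2}}{\rho(\xi)}.
\]

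Finally I would change variables $\xi=\delta_s(\sigma)$ with $\sigma\in\partial B^{\mathbb{H}^1}_1(0)$. At $\xi=\delta_s(\sigma)$ one has $\rho(\xi)=s$ and the identity above specializes to $|\nabla_{\mathbb{H}^1}\rho|(\delta_s(\sigma))=\sqrt{\sigma_x^2+\sigma_y^2}$, while the intrinsic perimeter is $\delta_s$-homogeneous of order $Q-1=3$, so $dP_{\mathbb{H}^1}\big|_{\partial B^{\mathbb{H}^1}_s(0)}=s^3\,d\sigma_{\mathbb{H}^1}$ under the parametrization $\sigma\mapsto\delta_s(\sigma)$. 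Collecting the factor $r^{-2}$ outside, the $s^{-2}$ from $\rho^{-2}$ in the integrand, and the $s^3$ from the perimeter scaling leaves one factor of $s$ in front of the inner integral, delivering exactly~\eqref{rewriting-J_u^H^1-2}. The main technical care lies in the bookkeeping of the Jacobian of $\delta_s$ on $dP_{\mathbb{H}^1}$ and in the coincidence $|\nabla_{\mathbb{H}^1}\rho|(\delta_s(\sigma))=\sqrt{\sigma_x^2+\sigma_y^2}$; once these are in place the proposition reduces to a clean rearrangement of powers of $s$.
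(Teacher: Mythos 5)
Your argument is correct and follows essentially the same route as the paper: expansion of $u$ into $\mathbb{H}^1$-homogeneous (hence $\mathbb{H}^1$-harmonic) polynomials $P_k$, the coarea formula for the Koranyi gauge with the identity $|\nabla_{\mathbb{H}^1}|\xi|_{\mathbb{H}^1}|=|\xi|_{\mathbb{H}^1}^{-1}\sqrt{x^2+y^2}$, the dilation change of variables with the $s^{Q-1}=s^3$ scaling of the surface measure, and the splitting of $|\sum_k\nabla_{\mathbb{H}^1}P_k|^2$ into diagonal and cross terms. Your explicit bookkeeping of the intrinsic perimeter via Proposition \ref{perimetro regolare} just makes precise what the paper's $d\sigma_{\mathbb{H}^1}$ notation leaves implicit.
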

	\begin{proof}
		Since \eqref{expansion-in-homog-polyn-harm-funct} holds in the same way for $u,$ even if $k$ is not necessarily the height of $\alpha,$ repeating the same argument used to obtain \eqref{rewriting-J_u-1}, we have from \eqref{functional-harm-funct-H^1}
		\begin{equation}\label{rewriting-J_u^H^1-1}
			\mathcal{I}_{u}^{\mathbb{H}^1}(r)=\frac{1}{r^2}\int_{B^{\mathbb{H}^1}_r(0)}\frac{|\sum_{k=1}^{\infty}\nabla_{\mathbb{H}^1} P_{k}|^2}{|\xi|_{\mathbb{H}^1}^{2}}\hspace{0.05cm}d\xi,
		\end{equation}
		where $P_k$ is homogeneous of degree $k$ in $\mathbb{H}^1,$ see Remark \ref{remark-express-homog-polyn-H^1}.\\
		We now want to exploit the coarea formula and the change of variables in spherical coordinates in $\mathbb{H}^1$ to further rewrite $\mathcal{I}_{u}^{\mathbb{H}^1}.$ Before applying them, we stress that $|\nabla_{\mathbb{H}^1} |\xi|_{\mathbb{H}^1}|\neq 0$ whenever $(x,y)\not=(0,0)\in \mathbb{R}^2.$ In particular, it holds \begin{equation}\label{norm-grad-heisen-koranyi-norm-H^1}
			|\nabla_{\mathbb{H}^1}|\xi|_{\mathbb{H}^1}|=|\xi|_{\mathbb{H}^1}^{-1}\sqrt{x^2+y^2}.
		\end{equation}
		We then achieve, from \eqref{rewriting-J_u^H^1-1}, 
		\begin{align*}
			&\mathcal{I}_{u}^{\mathbb{H}^1}(r)
=\frac{1}{r^2}\int^r_0\bigg(\int_{\partial B^{\mathbb{H}^1}_s(0)}\frac{|\sum_{k=1}^{\infty}(\nabla_{\mathbb{H}^1} P_{k})(\eta)|^2}{s^2\sqrt{\eta_x^2+\eta_y^2}}s\hspace{0.05cm}d\eta_{\mathbb{H}^1}\bigg)ds\\
			&
=\frac{1}{r^2}\int^r_0\bigg(\int_{\partial B^{\mathbb{H}^1}_1(0)}\frac{|\sum_{k=1}^{\infty}(\nabla_{\mathbb{H}^1} P_{k})(\delta_s(\sigma))|^2}{s\sqrt{s^2\sigma_x^2+s^2\sigma_y^2}}\hspace{0.05cm}s^3\hspace{0.05cm}d\sigma_{\mathbb{H}^1}\bigg)ds,
		\end{align*}
		which implies
		\begin{equation*}
			\begin{split}
				&\mathcal{I}_{u}^{\mathbb{H}^1}(r)=\frac{1}{r^2}\int^r_0s\bigg(\sum_{k=1}^{\infty}\int_{\partial B^{\mathbb{H}^1}_1(0)}\frac{|(\nabla_{\mathbb{H}^1} P_{k})(\delta_s(\sigma))|^2}{\sqrt{\sigma_x^2+\sigma_y^2}}\hspace{0.05cm}d\sigma_{\mathbb{H}^1}\bigg)ds\\
				&+\frac{1}{r^2}\int^r_0s\bigg(\sum_{\stackrel{h,k=1}{h\neq k}}^{\infty}\int_{\partial B^{\mathbb{H}^1}_1(0)}\frac{\langle(\nabla_{\mathbb{H}^1} P_{h})(\delta_s(\sigma)),(\nabla_{\mathbb{H}^1} P_{k})(\delta_s(\sigma))\rangle}{\sqrt{\sigma_x^2+\sigma_y^2}}\hspace{0.05cm}d\sigma_{\mathbb{H}^1}\bigg)ds,
			\end{split}
		\end{equation*}
		with $\sigma=(\sigma_x,\sigma_y,\sigma_t)\in \partial B^{\mathbb{H}^1}_1(0).$
	\end{proof}
	\begin{rem}\label{remark-express-homog-polyn-H^1}
		By the definition of the dilation semigroup in $\mathbb{H}^1,$ see \eqref{dilation}, it follows that
		\begin{equation}\label{homogeneity-xi^beta}
			\delta_r(\xi)^{\beta}=(rx)^{\beta_1}(ry)^{\beta_2}(r^2t)^{\beta_3}=r^{\beta_1+\beta_2+2\beta_3}\xi^{\beta}.
		\end{equation}
		This fact then entails that $\xi^{\beta}$ is homogeneous of degree $|\beta|+\beta_3.$ Hence, a homogeneous polynomial $P_k$ of degree $k$ in $\mathbb{H}^1$ has the form
		\begin{equation}\label{express-homog-polyn-H^1}
			P_{k}(\xi)=\sum_{\beta,\hspace{0.1cm}|\beta|+\beta_3=k}b_{\beta}\xi^{\beta}.
		\end{equation}
		Properties of polynomials in the Heisenberg group have been investigated in \cite{Greiner}, where it has been pointed out how the harmonic polynomials in the Heisenberg group have done.
	\end{rem}
	Therefore, to understand the behavior of $\mathcal{I}_{u}^{\mathbb{H}^1}$ it is useful to compute the gradient of homogeneous polynomials.
	\begin{lem}\label{lemma-grad-homog-polyn-H^1}
		Let $P_{k}$ be as in \eqref{express-homog-polyn-H^1}. Then, it holds
		\[\nabla_{\mathbb{H}^1} P_{k}(\xi)=\sum_{\beta,\hspace{0.1cm}|\beta|+\beta_3=k}b_{\beta}\xi^{\beta}(\beta_1 x^{-1}+2\beta_3 yt^{-1},\beta_2 y^{-1}-2\beta_3 xt^{-1}).\]
	\end{lem}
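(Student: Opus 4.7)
The statement is a direct computation, so the plan is essentially to unfold the definition of the horizontal gradient on a single monomial and then invoke linearity. Concretely, recall that in the conventions fixed in Section \ref{Heisenberg_setting} we have the two horizontal vector fields
\[
X=\partial_x+2y\partial_t,\qquad Y=\partial_y-2x\partial_t,
\]
and $\nabla_{\mathbb{H}^1}P_k=(XP_k,YP_k)$ when we identify the fiber $H\mathbb{H}^1_\xi$ with $\mathbb{R}^2$ via the orthonormal frame $\{X,Y\}$. Thus it suffices to compute $X(\xi^\beta)$ and $Y(\xi^\beta)$ for a generic monomial $\xi^\beta=x^{\beta_1}y^{\beta_2}t^{\beta_3}$ appearing in the representation \eqref{express-homog-polyn-H^1}, and then sum the resulting expressions against the coefficients $b_\beta$.

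For the first component, I would apply $X$ term by term:
\[
X(x^{\beta_1}y^{\beta_2}t^{\beta_3})=\beta_1 x^{\beta_1-1}y^{\beta_2}t^{\beta_3}+2y\,\beta_3\,x^{\beta_1}y^{\beta_2}t^{\beta_3-1}=\xi^\beta\bigl(\beta_1 x^{-1}+2\beta_3\, yt^{-1}\bigr),
\]
where the formal factors $x^{-1}$ and $t^{-1}$ are only used as a compact way to pull $\xi^\beta$ out in front — when $\beta_1=0$ or $\beta_3=0$ the corresponding coefficient already annihilates the term, so no genuine singularity appears. An analogous computation with $Y$ gives
\[
Y(x^{\beta_1}y^{\beta_2}t^{\beta_3})=\beta_2 x^{\beta_1}y^{\beta_2-1}t^{\beta_3}-2x\,\beta_3\,x^{\beta_1}y^{\beta_2}t^{\beta_3-1}=\xi^\beta\bigl(\beta_2 y^{-1}-2\beta_3\, xt^{-1}\bigr).
\]

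Combining these two identities into the pair $(X,Y)$, using linearity, and summing over all multi-indices $\beta$ with $|\beta|+\beta_3=k$ produces exactly the formula claimed in the statement. There is no real obstacle: the only point that deserves a line of commentary, as above, is that the apparent negative powers of $x,y,t$ in the right-hand side are a bookkeeping device to factor $\xi^\beta$, and are never actually evaluated when the corresponding component of $\beta$ vanishes. No homogeneity or harmonicity of $P_k$ is used; the lemma holds for any homogeneous polynomial in the Heisenberg sense.
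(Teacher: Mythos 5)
Your computation is correct and follows essentially the same route as the paper: unfold $\nabla_{\mathbb{H}^1}=(X,Y)$ on each monomial $\xi^\beta$, factor out $\xi^\beta$, and sum by linearity. The remark that the negative powers $x^{-1}$, $y^{-1}$, $t^{-1}$ are mere bookkeeping (harmless because the accompanying coefficients vanish when the exponent is zero) is a sensible clarification, but the argument itself matches the paper's proof.
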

	\begin{proof}
		The proof is a straightforward computation. Given a smooth function $u:\mathbb{H}^1\rightarrow\mathbb{R},$ we denote by $\nabla_{\mathbb{H}^1}u$ as
		\begin{equation}\label{defin-grad-heisen-H^1}
			\nabla_{\mathbb{H}^1}u\equiv (Xu,Yu),\quad X\coloneqq\frac{\partial}{\partial x}+2y\frac{\partial}{\partial t},\quad Y\coloneqq \frac{\partial}{\partial y}-2x\frac{\partial}{\partial t}.
		\end{equation}
		Consequently, we obtain
		\[\nabla_{\mathbb{H}^1} P_{k}(\xi)=\sum_{\beta,\hspace{0.1cm}|\beta|+\beta_3=k}b_{\beta}(X\xi^{\beta},Y\xi^{\beta}).\]
		So, it remains to calculate $X\xi^{\beta}$ and $Y\xi^{\beta}.$ We get by \eqref{defin-grad-heisen-H^1}
		\begin{align*}
			&X\xi^{\beta}=\beta_1 x^{\beta_1-1}y^{\beta_2}t^{\beta_3}+2\beta_3x^{\beta_1}y^{\beta_2+1}t^{\beta_3-1}=\beta_1 x^{-1}\xi^{\beta}+2\beta_3 yt^{-1}\xi^{\beta},\\
			&Y\xi^{\beta}=\beta_2 x^{\beta_1}y^{\beta_2-1}t^{\beta_3}-2\beta_3x^{\beta_1+1}y^{\beta_2}t^{\beta_3-1}=\beta_2 y^{-1}\xi^{\beta}-2\beta_3 xt^{-1}\xi^{\beta},
		\end{align*}
		which immediately yields the desired equality.
	\end{proof}
	We are now ready to find the correspondent expression of $\mathcal{I}_{u}^{\mathbb{H}^1}$ to \eqref{rewriting-J_u-final}. Precisely, we get its following rewriting. 
	\begin{prop}\label{prop-rewriting-J_u^H^1-final}
		Let $\mathcal{I}_{u}^{\mathbb{H}^1}$ be as in \eqref{functional-harm-funct-H^1}. Then, we have 
		\begin{equation}\label{rewriting-J_u^H^1-final}
			\mathcal{I}_{u}^{\mathbb{H}^1}(r)=\sum_{k=1}^{\infty}r^{2(k-1)}a_{k}^{\mathbb{H}^1}+\sum_{\stackrel{h,k=1}{h\neq k}}^{\infty}r^{h+k-2}a_{h,k}^{\mathbb{H}^1},
		\end{equation}
		where 
		\begin{align}\label{defin-a_k-a_h,k-funct-J_u^H^1}
			&a_{k}^{\mathbb{H}^1}\coloneqq \frac{1}{2k}\int_{\partial B^{\mathbb{H}^1}_1(0)}\frac{Q_k(\sigma)}{\sqrt{\sigma_x^2+\sigma_y^2}}\hspace{0.05cm}d\sigma_{\mathbb{H}^1},\notag\\
			&a_{h,k}^{\mathbb{H}^1}\coloneqq\frac{1}{h+k}\int_{\partial B^{\mathbb{H}^1}_1(0)}\frac{T_{h,k}(\sigma)}{\sqrt{\sigma_x^2+\sigma_y^2}}
			\hspace{0.05cm}d\sigma_{\mathbb{H}^1},\notag\\
		\end{align}
		with
		\begin{equation*}
			\begin{split}
				&Q_k(\sigma)\coloneqq 
				\sum_{\beta,\hspace{0.1cm}|\beta|+\beta_3=k}b_{\beta}^2\sigma^{2\beta}q_k(\beta,\sigma)+\sum_{\stackrel{\beta,\hspace{0.025cm}\gamma,\hspace{0.1cm}|\beta|+\beta_3=k}{|\gamma|+\gamma_3=k,\hspace{0.1cm}\beta\neq \gamma}}b_{\beta}b_{\gamma}\sigma^{\beta+\gamma}q_k(\beta,\gamma,\sigma)\\
				\\
				&q_k(\beta,\sigma)\coloneqq \beta_1^2\sigma_x^{-2}+\beta_2^2 \sigma_y^{-2}+4\beta_3^2\sigma_t^{-2} (\sigma_x^2+\sigma_y^2)+4\beta_3\sigma_t^{-1}(\beta_1\sigma_x^{-1} \sigma_y\\
				&-\beta_2\sigma_x\sigma_y^{-1}),\\
			\end{split}
		\end{equation*}	\begin{equation}\label{defin-polyn-square-norm-grad-homog-polyn-H^1-spher-coord}
			\begin{split}
				\\
				&q_k(\beta,\gamma,\sigma)\coloneqq \beta_1 \gamma_1 \sigma_x^{-2}+\beta_2\gamma_2 \sigma_y^{-2}+4\beta_3 \gamma_3\sigma_t^{-2}(\sigma_x^2+\sigma_y^2)\\ &+2\sigma_t^{-1}((\beta_1\gamma_3+\beta_3\gamma_1)\sigma_x^{-1} \sigma_y-(\beta_2\gamma_3+\beta_3\gamma_2)\sigma_x\sigma_y^{-1}),
			\end{split}
		\end{equation}
		and
		\begin{equation}\label{defin-polyn-scal-prod-grad-homog-polyn-H^1-spher-coord}
			\begin{split}
				&T_{h,k}(\sigma)\coloneqq \sum_{\stackrel{\beta,\hspace{0.025cm}\gamma,\hspace{0.1cm}|\beta|+\beta_3=h,}{|\gamma|+\gamma_3=k}}b_{\beta}b_{\gamma}\sigma^{\beta+\gamma}q_{h,k}(\beta,\gamma,\sigma),\\
				\\
				&q_{h,k}(\beta,\gamma,\sigma)\coloneqq \beta_1 \gamma_1 \sigma_x^{-2}+\beta_2\gamma_2 \sigma_y^{-2}+4\beta_3 \gamma_3\sigma_t^{-2}(\sigma_x^2+\sigma_y^2)\\ &+2\sigma_t^{-1}((\beta_1\gamma_3+\beta_3\gamma_1)\sigma_x^{-1} \sigma_y-(\beta_2\gamma_3+\beta_3\gamma_2)\sigma_x\sigma_y^{-1}),
		\end{split}
		\end{equation}
	\end{prop}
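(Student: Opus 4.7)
The plan is to continue directly from equation \eqref{rewriting-J_u^H^1-2} and reduce the proof to (i) the correct scaling of the horizontal gradient under the anisotropic dilations $\delta_s$, together with (ii) the explicit algebraic expansion of $|\nabla_{\mathbb{H}^1}P_k|^2$ and $\langle\nabla_{\mathbb{H}^1}P_h,\nabla_{\mathbb{H}^1}P_k\rangle$ coming from Lemma \ref{lemma-grad-homog-polyn-H^1}. For (i), since $P_k\circ\delta_s=s^k P_k$ by Remark \ref{remark-express-homog-polyn-H^1} and since $X(u\circ\delta_s)=s\,(Xu)\circ\delta_s$, $Y(u\circ\delta_s)=s\,(Yu)\circ\delta_s$ (the identities recorded after \eqref{dilation}), applying the first relation to $u=P_k$ gives $s^k\,XP_k=s\,(XP_k)\circ\delta_s$, hence
\[
(XP_k)(\delta_s(\sigma))=s^{k-1}(XP_k)(\sigma),\qquad (YP_k)(\delta_s(\sigma))=s^{k-1}(YP_k)(\sigma),
\]
so that $|(\nabla_{\mathbb{H}^1}P_k)(\delta_s\sigma)|^2=s^{2(k-1)}|(\nabla_{\mathbb{H}^1}P_k)(\sigma)|^2$ and $\langle(\nabla_{\mathbb{H}^1}P_h)(\delta_s\sigma),(\nabla_{\mathbb{H}^1}P_k)(\delta_s\sigma)\rangle=s^{h+k-2}\langle(\nabla_{\mathbb{H}^1}P_h)(\sigma),(\nabla_{\mathbb{H}^1}P_k)(\sigma)\rangle$.

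Substituting these scalings into \eqref{rewriting-J_u^H^1-2} and interchanging the (finite, for polynomial $u$, and otherwise locally uniformly convergent in $\sigma$) sums with the $s$-integral, one computes $\int_0^r s\cdot s^{2(k-1)}\,ds=r^{2k}/(2k)$ for the diagonal contribution and $\int_0^r s\cdot s^{h+k-2}\,ds=r^{h+k}/(h+k)$ for the cross contribution. After dividing by $r^2$, these produce precisely the powers $r^{2(k-1)}$ and $r^{h+k-2}$ multiplied by the angular integrals
\[
\frac{1}{2k}\int_{\partial B_1^{\mathbb{H}^1}(0)}\frac{|(\nabla_{\mathbb{H}^1}P_k)(\sigma)|^2}{\sqrt{\sigma_x^2+\sigma_y^2}}\,d\sigma_{\mathbb{H}^1},\qquad \frac{1}{h+k}\int_{\partial B_1^{\mathbb{H}^1}(0)}\frac{\langle(\nabla_{\mathbb{H}^1}P_h)(\sigma),(\nabla_{\mathbb{H}^1}P_k)(\sigma)\rangle}{\sqrt{\sigma_x^2+\sigma_y^2}}\,d\sigma_{\mathbb{H}^1},
\]
which are exactly $a_k^{\mathbb{H}^1}$ and $a_{h,k}^{\mathbb{H}^1}$ as in \eqref{defin-a_k-a_h,k-funct-J_u^H^1}, once $Q_k$ and $T_{h,k}$ are identified with $|\nabla_{\mathbb{H}^1}P_k|^2$ and $\langle\nabla_{\mathbb{H}^1}P_h,\nabla_{\mathbb{H}^1}P_k\rangle$ evaluated at $\sigma$.

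This last identification is the purpose of the explicit polynomials $q_k,q_{h,k}$ displayed in \eqref{defin-polyn-square-norm-grad-homog-polyn-H^1-spher-coord}--\eqref{defin-polyn-scal-prod-grad-homog-polyn-H^1-spher-coord}, and is purely an algebraic expansion: writing $\nabla_{\mathbb{H}^1}P_k=\sum_\beta b_\beta\xi^\beta(\beta_1 x^{-1}+2\beta_3 yt^{-1},\,\beta_2 y^{-1}-2\beta_3 xt^{-1})$ by Lemma \ref{lemma-grad-homog-polyn-H^1}, squaring, and separating the $\beta=\gamma$ and $\beta\neq\gamma$ contributions yields the decomposition of $Q_k$; the analogous bilinear expansion of the scalar product gives $T_{h,k}$. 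The only obstacle is the careful bookkeeping of the mixed terms: in the diagonal case $\beta=\gamma$ the coincident index produces the combined coefficient $4\beta_3$ (from $2\cdot 2\beta_3$) multiplying $\sigma_t^{-1}(\beta_1\sigma_x^{-1}\sigma_y-\beta_2\sigma_x\sigma_y^{-1})$, whereas in the off-diagonal case $\beta\neq\gamma$ (and in the bilinear case $h\neq k$) the analogous factor splits symmetrically into $2\sigma_t^{-1}((\beta_1\gamma_3+\beta_3\gamma_1)\sigma_x^{-1}\sigma_y-(\beta_2\gamma_3+\beta_3\gamma_2)\sigma_x\sigma_y^{-1})$. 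No analytic difficulty arises; the proof is essentially a computation combining dilation homogeneity, polar coordinates in the Kor\'anyi gauge, and the explicit form of $\nabla_{\mathbb{H}^1}$ on monomials.
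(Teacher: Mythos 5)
Your proposal is correct and follows essentially the same route as the paper's proof: starting from \eqref{rewriting-J_u^H^1-2}, establishing the homogeneity $|(\nabla_{\mathbb{H}^1}P_k)(\delta_s(\sigma))|^2=s^{2(k-1)}Q_k(\sigma)$ and $\langle(\nabla_{\mathbb{H}^1}P_h)(\delta_s(\sigma)),(\nabla_{\mathbb{H}^1}P_k)(\delta_s(\sigma))\rangle=s^{h+k-2}T_{h,k}(\sigma)$, integrating $\int_0^r s^{2k-1}\,ds$ and $\int_0^r s^{h+k-1}\,ds$, and identifying $Q_k$, $T_{h,k}$ through the expansion of Lemma \ref{lemma-grad-homog-polyn-H^1}. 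The only cosmetic difference is that you obtain the scaling from the dilation-commutation identity $X(u\circ\delta_s)=s\,(Xu)\circ\delta_s$ applied to $P_k$, while the paper extracts the factor $s^{k-1}$ directly from the monomial expansion; the algebra and conclusion coincide.
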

	\begin{proof}
		The proof follows by a rewriting of \eqref{rewriting-J_u^H^1-2}.\\ About the first term in \eqref{rewriting-J_u^H^1-2},
		\[\frac{1}{r^2}\int^r_0s\bigg(\sum_{k=1}^{\infty}\int_{\partial B^{\mathbb{H}^1}_1(0)}\frac{|(\nabla_{\mathbb{H}^1} P_{k})(\delta_s(\sigma))|^2}{\sqrt{\sigma_x^2+\sigma_y^2}}\hspace{0.05cm}d\sigma_{\mathbb{H}^1}\bigg)ds,\]
		using Lemma \ref{lemma-grad-homog-polyn-H^1} and \eqref{homogeneity-xi^beta}, we have
		\begin{align*}
			&|(\nabla_{\mathbb{H}^1} P_{k})(\delta_s(\sigma))|^2=\bigg|\sum_{\beta,\hspace{0.1cm}|\beta|+\beta_3=k}b_{\beta}s^k\sigma^{\beta}(\beta_1 s^{-1}\sigma_x^{-1}+2\beta_3 s^{-1}\sigma_y\sigma_t^{-1},\beta_2 s^{-1}\sigma_y^{-1}\\
			&-2\beta_3 s^{-1}\sigma_x\sigma_t^{-1})\bigg|^2\\
			&=\bigg(\sum_{\beta,\hspace{0.1cm}|\beta|+\beta_3=k}b_{\beta}s^{k-1}\sigma^{\beta}(\beta_1\sigma_x^{-1}+2\beta_3 \sigma_y\sigma_t^{-1})\bigg)^2\\
			&+\bigg(\sum_{\beta,\hspace{0.1cm}|\beta|+\beta_3=k}b_{\beta}s^{k-1}\sigma^{\beta}(\beta_2\sigma_y^{-1}-2\beta_3 \sigma_x\sigma_t^{-1})\bigg)^2\\
			&=\sum_{\beta,\hspace{0.1cm}|\beta|+\beta_3=k}(b_{\beta}s^{k-1}\sigma^{\beta}(\beta_1 \sigma_x^{-1}+2\beta_3\sigma_y\sigma_t^{-1}))^2+2\sum_{\stackrel{\beta,\hspace{0.025cm}\gamma,\hspace{0.1cm}|\beta|+\beta_3=k}{|\gamma|+\gamma_3=k,\hspace{0.1cm}\beta\neq \gamma}}b_{\beta}s^{k-1}\sigma^{\beta}(\beta_1\\
			&\sigma_x^{-1}+2\beta_3 \sigma_y\sigma_t^{-1})b_{\gamma}s^{k-1}\sigma^{\gamma}(\gamma_1 \sigma_x^{-1}+2\gamma_3\sigma_y\sigma_t^{-1})+\sum_{\beta,\hspace{0.1cm}|\beta|+\beta_3=k}(b_{\beta}s^{k-1}\sigma^{\beta}(\beta_2 \sigma_y^{-1}\\
			&-2\beta_3\sigma_x\sigma_t^{-1}))^2+2\sum_{\stackrel{\beta,\hspace{0.025cm}\gamma,\hspace{0.1cm}|\beta|+\beta_3=k}{|\gamma|+\gamma_3=k,\hspace{0.1cm}\beta\neq \gamma}}b_{\beta}s^{k-1}\sigma^{\beta}(\beta_2 \sigma_y^{-1}-2\beta_3 \sigma_x\sigma_t^{-1})b_{\gamma}s^{k-1}\sigma^{\gamma}(\gamma_2 \\
			&\sigma_y^{-1}-2\gamma_3\sigma_x\sigma_t^{-1})\\
			&=s^{2(k-1)}\sum_{\beta,\hspace{0.1cm}|\beta|+\beta_3=k}b_{\beta}^2\sigma^{2\beta}(\beta_1^2\sigma_x^{-2}+4\beta_3^2 \sigma_y^2\sigma_t^{-2}+4\beta_1\beta_3\sigma_x^{-1} \sigma_y\sigma_t^{-1})
	\end{align*}
	\begin{align*}
		    &+2s^{2(k-1)}\sum_{\stackrel{\beta,\hspace{0.025cm}\gamma,\hspace{0.1cm}|\beta|+\beta_3=k}{|\gamma|+\gamma_3=k,\hspace{0.1cm}\beta\neq \gamma}}b_{\beta}b_{\gamma}\sigma^{\beta+\gamma}(\beta_1 \gamma_1 \sigma_x^{-2}+2\beta_3\gamma_1\sigma_x^{-1} \sigma_y\sigma_t^{-1}+2\beta_1\gamma_3\sigma_x^{-1}\sigma_y\\
			&\sigma_t^{-1}+4\beta_3 \gamma_3 \sigma_y^2\sigma_t^{-2})\\
			&+s^{2(k-1)}\sum_{\beta,\hspace{0.1cm}|\beta|+\beta_3=k}b_{\beta}^2\sigma^{2\beta}(\beta_2^2\sigma_y^{-2}+4\beta_3^2 \sigma_x^2\sigma_t^{-2}-4\beta_2\beta_3\sigma_x\sigma_y^{-1}\sigma_t^{-1})\\
			&+2s^{2(k-1)}\sum_{\stackrel{\beta,\hspace{0.025cm}\gamma,\hspace{0.1cm}|\beta|+\beta_3=k}{|\gamma|+\gamma_3=k,\hspace{0.1cm}\beta\neq \gamma}}b_{\beta}b_{\gamma}\sigma^{\beta+\gamma}(\beta_2 \gamma_2 \sigma_y^{-2}-2\beta_3\gamma_2\sigma_x\sigma_y^{-1}\sigma_t^{-1}-2\beta_2\gamma_3\sigma_x\sigma_y^{-1}\\
			&\sigma_t^{-1}+4\beta_3 \gamma_3 \sigma_x^2\sigma_t^{-2})\\
		\end{align*}
		which yields
		\begin{equation}\label{square-norm-grad-homog-polyn-H^1-spher-coord}
			|(\nabla_{\mathbb{H}^1} P_{k})(\delta_s(\sigma))|^2=s^{2(k-1)}Q_k(\sigma),
		\end{equation}
		where we denote $Q_k(\sigma)$ by
		\begin{align*}
			&Q_k(\sigma)\coloneqq 
			\sum_{\beta,\hspace{0.1cm}|\beta|+\beta_3=k}b_{\beta}^2\sigma^{2\beta}q_k(\beta,\sigma)+\sum_{\stackrel{\beta,\hspace{0.025cm}\gamma,\hspace{0.1cm}|\beta|+\beta_3=k}{|\gamma|+\gamma_3=k,\hspace{0.1cm}\beta\neq \gamma}}b_{\beta}b_{\gamma}\sigma^{\beta+\gamma}q_k(\beta,\gamma,\sigma),\\
			\\
			&q_k(\beta,\sigma)\coloneqq \beta_1^2\sigma_x^{-2}+\beta_2^2 \sigma_y^{-2}+4\beta_3^2\sigma_t^{-2} (\sigma_x^2+\sigma_y^2)+4\beta_3\sigma_t^{-1}(\beta_1\sigma_x^{-1} \sigma_y\\
			&-\beta_2\sigma_x\sigma_y^{-1})\\
			\\
			&q_k(\beta,\gamma,\sigma)\coloneqq \beta_1 \gamma_1 \sigma_x^{-2}+\beta_2\gamma_2 \sigma_y^{-2}+4\beta_3 \gamma_3\sigma_t^{-2}(\sigma_x^2+\sigma_y^2)\\ &+2\sigma_t^{-1}((\beta_1\gamma_3+\beta_3\gamma_1)\sigma_x^{-1} \sigma_y-(\beta_2\gamma_3+\beta_3\gamma_2)\sigma_x\sigma_y^{-1}).
		\end{align*}
		Concerning the second term in \eqref{rewriting-J_u^H^1-2},
		\[\frac{1}{r^2}\int^r_0s\bigg(\sum_{\stackrel{h,k=1}{h\neq k}}^{\infty}\int_{\partial B^{\mathbb{H}^1}_1(0)}\frac{\langle(\nabla_{\mathbb{H}^1} P_{h})(\delta_s(\sigma)),(\nabla_{\mathbb{H}^1} P_{k})(\delta_s(\sigma))\rangle}{\sqrt{\sigma_x^2+\sigma_y^2}}\hspace{0.05cm}d\sigma_{\mathbb{H}^1}\bigg)ds,\]
		 we first notice that a priori we do not know if the gradients of two $\mathbb{H}^1$-harmonic homogeneous polynomials of different degrees are orthogonal on the Koranyi unit sphere $\partial B^{\mathbb{H}^1}_1(0),$ in the sense of satisfying
		\[\int_{\partial B^{\mathbb{H}^1}_1(0)}\frac{\langle(\nabla_{\mathbb{H}^1} P_{h})(\delta_s(\sigma)),(\nabla_{\mathbb{H}^1} P_{k})(\delta_s(\sigma))\rangle}{\sqrt{\sigma_x^2+\sigma_y^2}}\hspace{0.05cm}d\sigma_{\mathbb{H}^1}=0,\quad h\neq k.\]
		Thus, we go back to the proof of the correspondent result in the Euclidean case, Lemma \ref{lemma-orthogonality-homog-harm-polyn}. We recall that Lemma \ref{lemma-orthogonality-homog-harm-polyn} is a direct consequence of the harmonicity property of the polynomials and the divergence theorem, where in particular we exploit the fact that the outward normal to $\partial B_1$ is exactly $x.$ In the case of $\mathbb{H}^1,$ we can use the same ingredients, but with the difference that the outward normal to $\partial B^{\mathbb{H}^1}_1(0)$ is 
		\begin{equation}\label{outward-normal-Koranyi-unit-sphere-H^1}
			\nu_{\mathbb{H}^1}(\sigma)\coloneqq \frac{(\nabla_{\mathbb{H}^1}|\xi|_{\mathbb{H}^1})(\sigma)}{|\nabla_{\mathbb{H}^1}|\xi|_{\mathbb{H}^1}|(\sigma)},
		\end{equation}
		if $\sigma$ is not a characteristic point for $\partial B^{\mathbb{H}^1}_1(0),$ see Section \ref{Heisenberg_setting}. This difference could yield in turn that the correspondent result to \eqref{condition-orthogon-homog-harm-polyn} in $\mathbb{H}^1$ is not true. To investigate this, we compute $\langle(\nabla_{\mathbb{H}^1}P_k)(\sigma), \nu_{\mathbb{H}^1}(\sigma)\rangle.$\\
		First of all, we write the explicit expression of $\nu_{\mathbb{H}^1}(\sigma),$ that is, according to 
		\eqref{norm-grad-heisen-koranyi-norm-H^1}, \eqref{outward-normal-Koranyi-unit-sphere-H^1} and the fact that $|\sigma|_{\mathbb{H}^1}=1,$ with $\sigma\in \partial B^{\mathbb{H}^1}_1(0),$ explicitly calculating
		\[\nu_{\mathbb{H}^1}(\sigma)=\frac{((\sigma_x^2+\sigma_y^2)\sigma_x+\sigma_y \sigma_t,(\sigma_x^2+\sigma_y^2)\sigma_y-\sigma_x \sigma_t)}{\sqrt{\sigma_x^2+\sigma_y^2}}.\]
		Hence we obtain, again by virtue of Lemma \ref{lemma-grad-homog-polyn-H^1},
		\begin{align*}
			&\langle(\nabla_{\mathbb{H}^1}P_k)(\sigma), \nu_{\mathbb{H}^1}(\sigma)\rangle=\frac{1}{\sqrt{\sigma_x^2+\sigma_y^2}}\sum_{\beta,\hspace{0.1cm}|\beta|+\beta_3=k}b_{\beta}\sigma^{\beta}(\beta_1(\sigma_x^2+\sigma_y^2)+2\beta_3\sigma_y^2\\
			&+\beta_1\sigma_x^{-1}\sigma_y\sigma_t+2\beta_3(\sigma_x^2+\sigma_y^2)\sigma_x\sigma_y\sigma_t^{-1}+\beta_2(\sigma_x^2+\sigma_y^2)+2\beta_3\sigma_x^2-\beta_2\sigma_x\sigma_y^{-1}\sigma_t\\
			&-2\beta_3(\sigma_x^2+\sigma_y^2)\sigma_x\sigma_y\sigma_t^{-1})=\frac{1}{\sqrt{\sigma_x^2+\sigma_y^2}}\sum_{\beta,\hspace{0.1cm}|\beta|+\beta_3=k}b_{\beta}\sigma^{\beta}((\beta_1+\beta_2+2\beta_3)\\
			&(\sigma_x^2+\sigma_y^2)+(\beta_1\sigma_x^{-1}\sigma_y-\beta_2\sigma_x\sigma_y^{-1})\sigma_t),
		\end{align*}
		which gives
		\begin{equation}\label{scal-prod-grad-homog-polyn-outward-normal-Koranyi-ball-H^1}
			\begin{split}
				&\langle(\nabla_{\mathbb{H}^1}P_k)(\sigma), \nu_{\mathbb{H}^1}(\sigma)\rangle=\frac{1}{\sqrt{\sigma_x^2+\sigma_y^2}}
				\sum_{\beta,\hspace{0.1cm}|\beta|+\beta_3=k}b_{\beta}\sigma^{\beta}(k(\sigma_x^2+\sigma_y^2)+(\beta_1\sigma_x^{-1}\sigma_y\\
				&-\beta_2\sigma_x\sigma_y^{-1})\sigma_t).
			\end{split}
		\end{equation}
		We then note that, differently from Lemma \ref{lemma-orthogonality-homog-harm-polyn}, $\langle(\nabla_{\mathbb{H}^1}P_k)(\sigma), \nu_{\mathbb{H}^1}(\sigma)\rangle$ is not $kP_k(\sigma),$ so we can not expect to achieve the same conclusion \eqref{rewriting-J_u-final} in the case of $\mathbb{H}^1.$ Specifically, if we repeat the same considerations done in the proof of Lemma \ref{lemma-orthogonality-homog-harm-polyn}, we reach the equality
		\begin{equation}\label{condition-orthogon-harm-homog-polyn-H^1}
			0=\int_{\partial B^{\mathbb{H}^1}_1(0)}(P_h(\sigma)\langle(\nabla_{\mathbb{H}^1}P_k)(\sigma), \nu_{\mathbb{H}^1}(\sigma)\rangle-P_k(\sigma)\langle(\nabla_{\mathbb{H}^1}P_h)(\sigma), \nu_{\mathbb{H}^1}(\sigma)\rangle)\hspace{0.05cm}d\sigma_{\mathbb{H}^1},
		\end{equation}
		whereas in \eqref{rewriting-J_u^H^1-2} we have
		\begin{equation}\label{integr-term-scalar-prod-gradients-J_u-H^1}
			\int_{\partial B^{\mathbb{H}^1}_1(0)}\frac{\langle(\nabla_{\mathbb{H}^1} P_{h})(\delta_s(\sigma)),(\nabla_{\mathbb{H}^1} P_{k})(\delta_s(\sigma))\rangle}{\sqrt{\sigma_x^2+\sigma_y^2}}\hspace{0.05cm}d\sigma_{\mathbb{H}^1}.
		\end{equation}
		Consequently, we can try to compare
		\begin{equation}\label{argument-integr-condition-orthogon-harm-homog-polyn-H^1-before-comput}
			P_h(\sigma)\langle\nabla_{\mathbb{H}^1}P_k(\sigma), \nu_{\mathbb{H}^1}(\sigma)\rangle-P_k(\sigma)\langle\nabla_{\mathbb{H}^1}P_h(\sigma), \nu_{\mathbb{H}^1}(\sigma)\rangle
		\end{equation}
		and
		\begin{equation}\label{scalar-prod-gradients-homog-polyn-H^1-weighted-spher-coord-before-comput}
			\frac{\langle(\nabla_{\mathbb{H}^1} P_{h})(\delta_s(\sigma)),(\nabla_{\mathbb{H}^1} P_{k})(\delta_s(\sigma))\rangle}{\sqrt{\sigma_x^2+\sigma_y^2}},
		\end{equation}
		with $h\neq k,$ to understand if the term \eqref{integr-term-scalar-prod-gradients-J_u-H^1} can be zero. Let us compute explicitly again.\\
		First, about \eqref{argument-integr-condition-orthogon-harm-homog-polyn-H^1-before-comput}, we recall \eqref{express-homog-polyn-H^1} and \eqref{scal-prod-grad-homog-polyn-outward-normal-Koranyi-ball-H^1} and we get
		\begin{equation}\label{argument-integr-condition-orthogon-harm-homog-polyn-H^1-computed}
			\begin{split}
				&P_h(\sigma)\langle\nabla_{\mathbb{H}^1}P_k(\sigma), \nu_{\mathbb{H}^1}(\sigma)\rangle-P_k(\sigma)\langle\nabla_{\mathbb{H}^1}P_h(\sigma), \nu_{\mathbb{H}^1}(\sigma)\rangle\\
				&=\frac{1}{\sqrt{\sigma_x^2+\sigma_y^2}}\bigg(\sum_{\stackrel{\beta,\hspace{0.025cm}\gamma,\hspace{0.1cm}|\beta|+\beta_3=h,}{|\gamma|+\gamma_3=k}}b_{\beta}b_{\gamma}\sigma^{\beta+\gamma}(k(\sigma_x^2+\sigma_y^2)+(\gamma_1\sigma_x^{-1}\sigma_y-\gamma_2\sigma_x\sigma_y^{-1})\sigma_t)\\
				&-\sum_{\stackrel{\beta,\hspace{0.025cm}\gamma,\hspace{0.1cm}|\beta|+\beta_3=h,}{|\gamma|+\gamma_3=k}}b_{\beta}b_{\gamma}\sigma^{\beta+\gamma}(h(\sigma_x^2+\sigma_y^2)+(\beta_1\sigma_x^{-1}\sigma_y-\beta_2\sigma_x\sigma_y^{-1})\sigma_t)\bigg)\\
				&=\frac{1}{\sqrt{\sigma_x^2+\sigma_y^2}}\sum_{\stackrel{\beta,\hspace{0.025cm}\gamma,\hspace{0.1cm}|\beta|+\beta_3=h,}{|\gamma|+\gamma_3=k}}b_{\beta}b_{\gamma}\sigma^{\beta+\gamma}((k-h)(\sigma_x^2+\sigma_y^2)+((\gamma_1-\beta_1)\sigma_x^{-1}\sigma_y\\
				&-(\gamma_2-\beta_2)\sigma_x\sigma_y^{-1})\sigma_t).		
			\end{split}
		\end{equation}
		Concerning \eqref{scalar-prod-gradients-homog-polyn-H^1-weighted-spher-coord-before-comput}, we obtain, repeating some of the steps exploited to have \eqref{square-norm-grad-homog-polyn-H^1-spher-coord},
		\begin{align*}
			&\langle(\nabla_{\mathbb{H}^1} P_{h})(\delta_s(\sigma)),(\nabla_{\mathbb{H}^1} P_{k})(\delta_s(\sigma))\rangle=\langle\sum_{\beta,\hspace{0.1cm}|\beta|+\beta_3=h}b_{\beta}s^{h-1}\sigma^{\beta}(\beta_1\sigma_x^{-1}+2\beta_3 \sigma_y\\
			&\sigma_t^{-1},\beta_2 \sigma_y^{-1}-2\beta_3 \sigma_x\sigma_t^{-1}),\sum_{\gamma,\hspace{0.1cm}|\gamma|+\gamma_3=k}b_{\gamma}s^{k-1}\sigma^{\gamma}(\gamma_1 \sigma_x^{-1}+2\gamma_3 \sigma_y\sigma_t^{-1},\gamma_2 \sigma_y^{-1}\\
			&-2\gamma_3 \sigma_x\sigma_t^{-1})\rangle
		\end{align*}
		\begin{align*}
			&=s^{h+k-2}\sum_{\stackrel{\beta,\hspace{0.025cm}\gamma,\hspace{0.1cm}|\beta|+\beta_3=h,}{|\gamma|+\gamma_3=k}}b_{\beta}b_{\gamma}\sigma^{\beta+\gamma}(\beta_1\gamma_1 \sigma_x^{-2}+2\beta_3\gamma_1\sigma_x^{-1}\sigma_y\sigma_t^{-1}+2\beta_1\gamma_3\sigma_x^{-1}\sigma_y\\
			&\sigma_t^{-1}+4\beta_3\gamma_3 \sigma_y^2\sigma_t^{-2}+\beta_2 \gamma_2 \sigma_y^{-2}-2\beta_3\gamma_2 \sigma_x\sigma_y^{-1}\sigma_t^{-1}-2\beta_2\gamma_3\sigma_x\sigma_y^{-1}\sigma_t^{-1}+4\beta_3\\
			&\gamma_3 \sigma_x^2\sigma_t^{-2})=s^{h+k-2}T_{h,k}(\sigma),\\   
		\end{align*}
		which implies
		\begin{equation}\label{scalar-prod-gradients-homog-polyn-H^1-weighted-spher-coord-computed}
			\begin{split}
				&\frac{\langle(\nabla_{\mathbb{H}^1} P_{h})(\delta_s(\sigma)),(\nabla_{\mathbb{H}^1} P_{k})(\delta_s(\sigma))\rangle}{\sqrt{\sigma_x^2+\sigma_y^2}}=\frac{s^{h+k-2}T_{h,k}(\sigma)}{\sqrt{\sigma_x^2+\sigma_y^2}},
			\end{split}
		\end{equation}
		with $T_{h,k}(\sigma)$ defined as 
		\begin{equation*}
			\begin{split}
			&T_{h,k}(\sigma)\coloneqq \sum_{\stackrel{\beta,\hspace{0.025cm}\gamma,\hspace{0.1cm}|\beta|+\beta_3=h,}{|\gamma|+\gamma_3=k}}b_{\beta}b_{\gamma}\sigma^{\beta+\gamma}q_{h,k}(\beta,\gamma,\sigma),\\
			\\
			&q_{h,k}(\beta,\gamma,\sigma)\coloneqq\beta_1\gamma_1 \sigma_x^{-2}+\beta_2 \gamma_2 \sigma_y^{-2}+2(\beta_1\gamma_3+\beta_3\gamma_1)\sigma_x^{-1}\sigma_y\sigma_t^{-1}\\
			&-2(\beta_2\gamma_3+\beta_3\gamma_2)\sigma_x\sigma_y^{-1}\sigma_t^{-1}+4\beta_3\gamma_3(\sigma_x^2+\sigma_y^2)\sigma_t^{-2}.  
			\end{split}
		\end{equation*}
		At this point, if we compare \eqref{argument-integr-condition-orthogon-harm-homog-polyn-H^1-computed} and \eqref{scalar-prod-gradients-homog-polyn-H^1-weighted-spher-coord-computed}, we see that the two terms are not the same. Therefore, we can not expect to have \eqref{integr-term-scalar-prod-gradients-J_u-H^1} equal to $0$ in \eqref{rewriting-J_u^H^1-2} and the correspondent expression to \eqref{rewriting-J_u-final} in $\mathbb{H}^1.$\\
		Let us look then at what we achieve substituting \eqref{square-norm-grad-homog-polyn-H^1-spher-coord} and \eqref{scalar-prod-gradients-homog-polyn-H^1-weighted-spher-coord-computed} in \eqref{rewriting-J_u^H^1-2}. We find
		\begin{align*}
			&\mathcal{I}_{u}^{\mathbb{H}^1}(r)=\frac{1}{r^2}\int^r_0s\bigg(\sum_{k=1}^{\infty}\int_{\partial B^{\mathbb{H}^1}_1(0)}\frac{s^{2(k-1)}Q_k(\sigma)}{\sqrt{\sigma_x^2+\sigma_y^2}}\hspace{0.05cm}d\sigma_{\mathbb{H}^1}\bigg)ds\\	
			&+\frac{1}{r^2}\int^r_0s\bigg(\sum_{\stackrel{h,k=1}{h\neq k}}^{\infty}\int_{\partial B^{\mathbb{H}^1}_1(0)}\frac{s^{h+k-2}T_{h,k}(\sigma)}{\sqrt{\sigma_x^2+\sigma_y^2}}
			\hspace{0.05cm}d\sigma_{\mathbb{H}^1}\bigg)ds\\
			&=\frac{1}{r^2}\bigg(\sum_{k=1}^{\infty}\int_{\partial B^{\mathbb{H}^1}_1(0)}\frac{Q_k(\sigma)}{\sqrt{\sigma_x^2+\sigma_y^2}}\bigg(\int^r_0s^{2k-1}ds\bigg)\hspace{0.05cm}d\sigma_{\mathbb{H}^1}\\
			&+\sum_{\stackrel{h,k=1}{h\neq k}}^{\infty}\int_{\partial B^{\mathbb{H}^1}_1(0)}\frac{T_{h,k}(\sigma)}{\sqrt{\sigma_x^2+\sigma_y^2}}\bigg(\int^r_0s^{h+k-1}ds\bigg)\hspace{0.05cm}d\sigma_{\mathbb{H}^1}\bigg)\\
			&=\frac{1}{r^2}\bigg(\sum_{k=1}^{\infty}\frac{r^{2k}}{2k}\int_{\partial B^{\mathbb{H}^1}_1(0)}\frac{Q_k(\sigma)}{\sqrt{\sigma_x^2+\sigma_y^2}}\hspace{0.05cm}d\sigma_{\mathbb{H}^1}+\sum_{\stackrel{h,k=1}{h\neq k}}^{\infty}\frac{r^{h+k}}{h+k}\int_{\partial B^{\mathbb{H}^1}_1(0)}\frac{T_{h,k}(\sigma)}{\sqrt{\sigma_x^2+\sigma_y^2}}\hspace{0.05cm}d\sigma_{\mathbb{H}^1}\bigg),
		\end{align*}
		which gives
		\begin{equation*}
			\mathcal{I}_{u}^{\mathbb{H}^1}(r)=\sum_{k=1}^{\infty}r^{2(k-1)}a_{k}^{\mathbb{H}^1}+\sum_{\stackrel{h,k=1}{h\neq k}}^{\infty}r^{h+k-2}a_{h,k}^{\mathbb{H}^1},\\
		\end{equation*}
		where 
		\begin{align*}
			&a_{k}^{\mathbb{H}^1}\coloneqq \frac{1}{2k}\int_{\partial B^{\mathbb{H}^1}_1(0)}\frac{Q_k(\sigma)}{\sqrt{\sigma_x^2+\sigma_y^2}}\hspace{0.05cm}d\sigma_{\mathbb{H}^1},\\
			&a_{h,k}^{\mathbb{H}^1}\coloneqq\frac{1}{h+k}\int_{\partial B^{\mathbb{H}^1}_1(0)}\frac{T_{h,k}(\sigma)}{\sqrt{\sigma_x^2+\sigma_y^2}}
			\hspace{0.05cm}d\sigma_{\mathbb{H}^1}.
		\end{align*}
	\end{proof}
	So, the different structure in $\mathbb{H}^1$ with respect to the Euclidean one yields a different expression for the functional $\mathcal{I}_{u}^{\mathbb{H}^1}$ compared to $\mathcal{I}_{u}.$\\
	However, we can deduce the correspondent statement to Proposition \ref{prop-limit-r-tend-to-0-monotonic-J_u}.
	\begin{prop}\label{prop-limit-r-tend-to-0-monotonic-J_u^H^1}
		Let $\mathcal{I}_{u}^{\mathbb{H}^1}$ be as in \eqref{functional-harm-funct-H^1}. Then  $\lim\limits_{r\rightarrow 0}\mathcal{I}_{u}^{\mathbb{H}^1}(r)=a_1^{\mathbb{H}^1},$ with $a_1^{\mathbb{H}^1}$ defined as in \eqref{defin-a_k-a_h,k-funct-J_u^H^1}, and the monotonicity behavior of $\mathcal{I}_{u}^{\mathbb{H}^1}$ around $r=0$ depends on the sign of $a_{2,1}^{\mathbb{H}^1},$ if this term is different from $0.$
	\end{prop}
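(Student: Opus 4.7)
My plan is to read both conclusions directly off the explicit power-series expansion \eqref{rewriting-J_u^H^1-final} already produced in Proposition \ref{prop-rewriting-J_u^H^1-final}: every quantitative feature of $\mathcal{I}_u^{\mathbb{H}^1}(r)$ near $r=0$ is encoded in its low-order coefficients, so it suffices to identify the lowest surviving contribution in each case. No further analysis of the Koranyi geometry is needed once that expansion is in hand.

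For the limit, I let $r\to 0^{+}$ in \eqref{rewriting-J_u^H^1-final}. In the diagonal sum $\sum_{k}r^{2(k-1)}a_{k}^{\mathbb{H}^1}$ only the index $k=1$ contributes an $r^{0}$-term, namely $a_{1}^{\mathbb{H}^1}$, while every other term carries $r^{2(k-1)}$ with $k\geq 2$. In the off-diagonal sum the condition $h\neq k$ combined with $h,k\geq 1$ forces $h+k\geq 3$, so each term carries at least $r^{h+k-2}\geq r$ and vanishes as $r\to 0^{+}$. This immediately yields $\lim_{r\to 0^{+}}\mathcal{I}_u^{\mathbb{H}^1}(r)=a_{1}^{\mathbb{H}^1}$.

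For the monotonicity statement I would differentiate \eqref{rewriting-J_u^H^1-final} termwise, obtaining
\[
\frac{d}{dr}\mathcal{I}_u^{\mathbb{H}^1}(r)=\sum_{k=2}^{\infty}2(k-1)\,a_{k}^{\mathbb{H}^1}\,r^{2k-3}+\sum_{\stackrel{h,k=1}{h\neq k}}^{\infty}(h+k-2)\,a_{h,k}^{\mathbb{H}^1}\,r^{h+k-3}.
\]
The key observation is that the smallest exponent on the right is $r^{0}$, produced exclusively by the pairs $(h,k)=(1,2)$ and $(h,k)=(2,1)$ in the second sum, while every other term is $O(r)$. Inspecting the formula for $q_{h,k}(\beta,\gamma,\sigma)$ in \eqref{defin-polyn-scal-prod-grad-homog-polyn-H^1-spher-coord}, one sees that it is invariant under the simultaneous swap $(h,\beta)\leftrightarrow(k,\gamma)$, so after relabeling dummy variables in the sum defining $T_{h,k}$ one gets $T_{1,2}=T_{2,1}$ and therefore $a_{1,2}^{\mathbb{H}^1}=a_{2,1}^{\mathbb{H}^1}$. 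The constant term of the derivative is then $a_{1,2}^{\mathbb{H}^1}+a_{2,1}^{\mathbb{H}^1}=2a_{2,1}^{\mathbb{H}^1}$, so
\[
\frac{d}{dr}\mathcal{I}_u^{\mathbb{H}^1}(r)=2a_{2,1}^{\mathbb{H}^1}+O(r)\qquad(r\to 0^{+}).
\]
If $a_{2,1}^{\mathbb{H}^1}\neq 0$, continuity preserves the sign of the derivative on a right neighborhood of $0$, and that sign agrees with the sign of $a_{2,1}^{\mathbb{H}^1}$, giving the claimed monotonicity behavior.

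The main obstacle, more conceptual than computational, is to recognize that while in the Euclidean setting the orthogonality \eqref{orthogonality-grad-homog-harm-polyn} kills the entire off-diagonal sum and leaves only even powers of $r$, in $\mathbb{H}^{1}$ that orthogonality fails: the outward normal to $\partial B_{1}^{\mathbb{H}^1}(0)$ is not simply the position vector, the divergence-theorem argument of Lemma \ref{lemma-orthogonality-homog-harm-polyn} breaks down, and one must also handle characteristic points on the Koranyi sphere. The net effect is to introduce an $r^{0}$-term of indefinite sign into $\mathcal{I}_u^{\mathbb{H}^1}{}'(r)$ — precisely the mechanism later exploited in Section \ref{decreasing_formula} to prove Theorem \ref{maintheorem}. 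Once Proposition \ref{prop-rewriting-J_u^H^1-final} is in place, the proof itself reduces to matching powers of $r$ and invoking the symmetry $a_{h,k}^{\mathbb{H}^1}=a_{k,h}^{\mathbb{H}^1}$.
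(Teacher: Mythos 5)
Your proposal is correct and follows essentially the same route as the paper: both read the limit $a_1^{\mathbb{H}^1}$ and the expansion $(\mathcal{I}_u^{\mathbb{H}^1})'(r)=2a_{2,1}^{\mathbb{H}^1}+O(r)$ directly off \eqref{rewriting-J_u^H^1-final}, using the symmetry $a_{1,2}^{\mathbb{H}^1}=a_{2,1}^{\mathbb{H}^1}$ to combine the lowest-order off-diagonal terms. Your explicit justification of that symmetry via the invariance of $q_{h,k}(\beta,\gamma,\sigma)$ under swapping $(h,\beta)\leftrightarrow(k,\gamma)$ is a small refinement of the paper's appeal to \eqref{defin-a_k-a_h,k-funct-J_u^H^1}, but the argument is otherwise the same.
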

	\begin{proof}
		The proof directly follows by Proposition \ref{prop-rewriting-J_u^H^1-final}. First, letting $r$ tend to $0$ in \eqref{rewriting-J_u^H^1-final}, we get
		\[
		\lim\limits_{r\rightarrow 0}\mathcal{I}_{u}^{\mathbb{H}^1}(r)=\lim\limits_{r\rightarrow 0}\bigg(\sum_{k=1}^{\infty}r^{2(k-1)}a_{k}^{\mathbb{H}^1}+\sum_{\stackrel{h,k=1}{h\neq k}}^{\infty}r^{h+k-2}a_{h,k}^{\mathbb{H}^1}\bigg)=a_1^{\mathbb{H}^1}.\\
		\]
		Next, for the condition on the monotonicity behavior of $\mathcal{I}_{u}^{\mathbb{H}^1},$ from \eqref{rewriting-J_u^H^1-final} we have
		\[(\mathcal{I}_{u}^{\mathbb{H}^1})'(r)=\sum_{k=2}^{\infty}2(k-1)r^{2k-3}a_{k}^{\mathbb{H}^1}+\sum_{\stackrel{h,k=1}{h\neq k}}^{\infty}(h+k-2)r^{h+k-3}a_{h,k}^{\mathbb{H}^1}.\\
		\]
		This expression then entails
		\[\lim\limits_{r\rightarrow 0}(\mathcal{I}_{u}^{\mathbb{H}^1})'(r)=2a_{2,1}^{\mathbb{H}^1},\]
		using that $a_{2,1}^{\mathbb{H}^1}=a_{1,2}^{\mathbb{H}^1}$ by \eqref{defin-a_k-a_h,k-funct-J_u^H^1}. Hence, the monotonicity behavior of $\mathcal{I}_{u}^{\mathbb{H}^1}$ around $r=0$ depends on the sign of
		$a_{2,1}^{\mathbb{H}^1},$
		if $a_{2,1}^{\mathbb{H}^1}$ is different from $0.$ 
	\end{proof}
	Let us discuss now, as in the Euclidean case, the condition \[\lim\limits_{r\rightarrow 0}\mathcal{I}_{u}^{\mathbb{H}^1}(r)\allowbreak=a_1^{\mathbb{H}^1}\] more.
	\begin{prop}
		Let $a_{1}^{\mathbb{H}^1}$ be as in \eqref{defin-a_k-a_h,k-funct-J_u^H^1}. It holds
		\[a_{1}^{\mathbb{H}^1}=\frac{|\nabla_{\mathbb{H}^1} u(0)|^2}{2}\int_{\partial B^{\mathbb{H}^1}_1(0)}\frac{1}{\sqrt{\sigma_x^2+\sigma_y^2}}\hspace{0.05cm}d\sigma_{\mathbb{H}^1}.\]
	\end{prop}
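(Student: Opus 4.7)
The plan is to compute $a_1^{\mathbb{H}^1}$ directly by evaluating $Q_1(\sigma)$ from its definition in \eqref{defin-polyn-square-norm-grad-homog-polyn-H^1-spher-coord}. First I would enumerate the multi-indices $\beta=(\beta_1,\beta_2,\beta_3)\in(\mathbb{N}\cup\{0\})^3$ that contribute to $P_1,$ i.e.\ those with $|\beta|+\beta_3=1.$ Since $\beta_3\ge 1$ already forces $|\beta|+\beta_3\ge 2,$ we must have $\beta_3=0$ and $\beta_1+\beta_2=1,$ leaving only $\beta\in\{(1,0,0),(0,1,0)\}.$ Consequently,
\[
P_1(\xi)=b_{(1,0,0)}x+b_{(0,1,0)}y.
\]

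Next I would plug these two multi-indices into the formulas for $q_1(\beta,\sigma)$ and $q_1(\beta,\gamma,\sigma).$ For the diagonal terms, the fact that $\beta_3=0$ kills the last three groups in $q_1(\beta,\sigma),$ and for $\beta=(1,0,0)$ we get $q_1(\beta,\sigma)\sigma^{2\beta}=\sigma_x^{-2}\cdot\sigma_x^2=1,$ contributing $b_{(1,0,0)}^2,$ and analogously $b_{(0,1,0)}^2$ for $\beta=(0,1,0).$ For the cross terms, all four products $\beta_1\gamma_1,$ $\beta_2\gamma_2,$ $\beta_3\gamma_3,$ together with the $\sigma_t^{-1}$-mixed products, vanish because $\beta_3=\gamma_3=0$ and the supports of $\beta$ and $\gamma$ are disjoint in the $x$ and $y$ coordinates. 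Hence $Q_1(\sigma)\equiv b_{(1,0,0)}^2+b_{(0,1,0)}^2,$ a constant in $\sigma.$

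Then I would identify this constant as $|\nabla_{\mathbb{H}^1}u(0)|^2.$ By Lemma \ref{lemma-grad-homog-polyn-H^1}, $\nabla_{\mathbb{H}^1}P_k$ is a polynomial whose components have $\mathbb{H}^1$-degree $k-1\ge 1$ when $k\ge 2,$ so each $\nabla_{\mathbb{H}^1}P_k(\xi)$ vanishes at $\xi=0$ for $k\ge 2.$ Combined with the expansion $u=\sum_{k\ge 0}P_k,$ this yields $\nabla_{\mathbb{H}^1}u(0)=\nabla_{\mathbb{H}^1}P_1(0)=(b_{(1,0,0)},b_{(0,1,0)}),$ so $|\nabla_{\mathbb{H}^1}u(0)|^2=b_{(1,0,0)}^2+b_{(0,1,0)}^2=Q_1(\sigma).$

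Substituting into the definition
\[
a_1^{\mathbb{H}^1}=\frac{1}{2}\int_{\partial B_1^{\mathbb{H}^1}(0)}\frac{Q_1(\sigma)}{\sqrt{\sigma_x^2+\sigma_y^2}}\,d\sigma_{\mathbb{H}^1}
\]
and pulling the constant $|\nabla_{\mathbb{H}^1}u(0)|^2$ outside the integral completes the argument. There is no real obstacle here: everything reduces to bookkeeping of the finitely many multi-indices with $|\beta|+\beta_3=1$ and the cancellations that occur in $q_1$ when $\beta_3=0$; the only point worth stressing carefully is why the higher-order $P_k$ contribute nothing to $\nabla_{\mathbb{H}^1}u(0),$ which is a direct consequence of the homogeneity formula in Lemma \ref{lemma-grad-homog-polyn-H^1}.
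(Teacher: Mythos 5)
Your proposal is correct and follows essentially the same route as the paper: identify $Q_1(\sigma)$ as the (constant) squared $\mathbb{H}^1$-gradient of the degree-one part $P_1=b_{(1,0,0)}x+b_{(0,1,0)}y$, recognize this constant as $|\nabla_{\mathbb{H}^1}u(0)|^2$ since the higher-degree homogeneous parts have gradients vanishing at the origin, and pull it out of the integral. Your version merely makes explicit the multi-index bookkeeping (and the vanishing of the same-degree cross terms in $Q_1$) that the paper leaves to Lemma \ref{lemma-grad-homog-polyn-H^1} and the Euclidean analogy.
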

	\begin{proof}
		According to \eqref{defin-a_k-a_h,k-funct-J_u^H^1}, we have
		\[a_{1}^{\mathbb{H}^1}=\frac{1}{2}\int_{\partial B^{\mathbb{H}^1}_1(0)}\frac{Q_{1}(\sigma)}{\sqrt{\sigma_x^2+\sigma_y^2}}\hspace{0.05cm}d\sigma_{\mathbb{H}^1}.\]
		In particular, recalling \eqref{defin-polyn-square-norm-grad-homog-polyn-H^1-spher-coord}, $Q_1$ is the square norm of the $\mathbb{H}^1$-gradient of the homogeneous polynomial of degree $1,$ $P_1,$ coming from the Taylor expansion of $u$ at $0.$ Therefore, exploiting Lemma \ref{lemma-grad-homog-polyn-H^1} and repeating the considerations done in the Euclidean case, we obtain the thesis.
	\end{proof}
	\begin{rem}
		The equality above shows us that the limit of $\mathcal{I}_{u}^{\mathbb{H}^1}$ with $r$ tending to $0$ is strictly positive depending on whether the $\mathbb{H}^1$-gradient of $u$ vanishes at $0$ or not, in other words whether $0$ is a characteristic point of $\{u=u(0)\}$ or not. Indeed, we have
		\[\int_{\partial B^{\mathbb{H}^1}_1(0)}\frac{1}{\sqrt{\sigma_x^2+\sigma_y^2}}\hspace{0.05cm}d\sigma_{\mathbb{H}^1}>0,\]
		since it is the integral of a strictly positive function on $\partial B^{\mathbb{H}^1}_1(0).$
	\end{rem}
	Moreover, we want to rewrite $a_{2,1}^{\mathbb{H}^1}$ to deal with a more explicit tool. Specifically, we have the following.
	\begin{prop}
		Let $a_{2,1}^{\mathbb{H}^1}$ be as in  \eqref{defin-a_k-a_h,k-funct-J_u^H^1}. Then, we have
		\begin{equation}\label{rewriting-monoton-condit-J_u^H^1-final}
			\begin{split}
				&a_{2,1}^{\mathbb{H}^1}=\frac{1}{3}\int_{\partial B^{\mathbb{H}^1}_1(0)}\frac{(2b_{(2,0,0)}b_{(1,0,0)}+b_{(0,1,0)}(b_{(1,1,0)}-2b_{(0,0,1)}))\sigma_x}{\sqrt{\sigma_x^2+\sigma_y^2}}\hspace{0.05cm}d\sigma_{\mathbb{H}^1}\\
				&+\frac{1}{3}\int_{\partial B^{\mathbb{H}^1}_1(0)}\frac{(2b_{(0,2,0)}b_{(0,1,0)}+b_{(1,0,0)}(b_{(1,1,0)}+2b_{(0,0,1)}))\sigma_y}{\sqrt{\sigma_x^2+\sigma_y^2}}\hspace{0.05cm}d\sigma_{\mathbb{H}^1}.\\
			\end{split}
		\end{equation}
	\end{prop}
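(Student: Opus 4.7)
The plan is to specialize the general formula
\[
a_{h,k}^{\mathbb{H}^1}=\frac{1}{h+k}\int_{\partial B^{\mathbb{H}^1}_1(0)}\frac{T_{h,k}(\sigma)}{\sqrt{\sigma_x^2+\sigma_y^2}}\,d\sigma_{\mathbb{H}^1}
\]
to the pair $(h,k)=(2,1)$, which gives the prefactor $\tfrac{1}{3}$ at once. The remaining work is purely combinatorial: expand $T_{2,1}(\sigma)$ by running over all admissible pairs of multi-indices $(\beta,\gamma)$ and summing the eight resulting contributions.

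First I would list the homogeneity constraints. The multi-indices $\beta$ with $|\beta|+\beta_3=2$ are exactly $(2,0,0),\,(1,1,0),\,(0,2,0),\,(0,0,1)$, while the multi-indices $\gamma$ with $|\gamma|+\gamma_3=1$ are $(1,0,0)$ and $(0,1,0)$. So $P_2$ and $P_1$ are encoded by the six coefficients appearing on the right-hand side of \eqref{rewriting-monoton-condit-J_u^H^1-final}, and there are precisely $4\times 2=8$ pairs $(\beta,\gamma)$ to handle.

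Next I would evaluate, for each of these $8$ pairs, the polynomial $q_{2,1}(\beta,\gamma,\sigma)$ from \eqref{defin-polyn-scal-prod-grad-homog-polyn-H^1-spher-coord} and multiply it by the monomial $\sigma^{\beta+\gamma}$. A useful organizing principle is to note that in every term the exponents conspire so that the product $\sigma^{\beta+\gamma}q_{2,1}(\beta,\gamma,\sigma)$ collapses to a degree-one monomial in $\sigma_x,\sigma_y$ with the potentially dangerous $\sigma_t^{-1}$ factors cancelled. For instance, with $(\beta,\gamma)=((2,0,0),(1,0,0))$ only the $\beta_1\gamma_1\sigma_x^{-2}$ term of $q_{2,1}$ survives and produces $2b_{(2,0,0)}b_{(1,0,0)}\sigma_x$; with $(\beta,\gamma)=((0,0,1),(1,0,0))$ only the cross term $2(\beta_1\gamma_3+\beta_3\gamma_1)\sigma_x^{-1}\sigma_y\sigma_t^{-1}$ contributes, giving $2b_{(0,0,1)}b_{(1,0,0)}\sigma_y$; and analogously for the remaining six cases. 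The pairs involving $(2,0,0)$--$(0,1,0)$ and $(0,2,0)$--$(1,0,0)$ vanish identically because every summand in $q_{2,1}$ has a factor that is zero.

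Finally, I would collect the eight results by the shared factor $\sigma_x$ or $\sigma_y$. Three pairs contribute terms proportional to $\sigma_x$ and three pairs contribute terms proportional to $\sigma_y$, producing the coefficients
\[
2b_{(2,0,0)}b_{(1,0,0)}+b_{(0,1,0)}\bigl(b_{(1,1,0)}-2b_{(0,0,1)}\bigr),\qquad 2b_{(0,2,0)}b_{(0,1,0)}+b_{(1,0,0)}\bigl(b_{(1,1,0)}+2b_{(0,0,1)}\bigr)
\]
respectively. Substituting this expression for $T_{2,1}(\sigma)$ into the definition of $a_{2,1}^{\mathbb{H}^1}$ and splitting the integral into its $\sigma_x$- and $\sigma_y$-parts yields \eqref{rewriting-monoton-condit-J_u^H^1-final}. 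The only real obstacle is the bookkeeping: each of the eight $q_{2,1}(\beta,\gamma,\sigma)$ must be evaluated carefully, and the cancellation of all $\sigma_t$-powers must be verified in each surviving term — this is where a sign error would most easily creep in, in particular in the pair $((0,0,1),(0,1,0))$ which produces the crucial minus sign in the first coefficient.
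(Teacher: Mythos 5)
Your proposal is correct and follows essentially the same route as the paper: both specialize the definition of $a_{h,k}^{\mathbb{H}^1}$ to $(h,k)=(2,1)$, enumerate the admissible multi-indices ($\gamma_3=0$ forcing $\gamma\in\{(1,0,0),(0,1,0)\}$, and $\beta\in\{(2,0,0),(1,1,0),(0,2,0),(0,0,1)\}$), expand $T_{2,1}$ term by term with the $\sigma_t$-powers cancelling, and collect the surviving $\sigma_x$- and $\sigma_y$-coefficients, which match the paper's \eqref{semplif-first-polyn-monoton-condit-J_u^H^1-1} and \eqref{semplif-first-polyn-monoton-condit-J_u^H^1-2}. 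Your spot-checked pairs, the two vanishing pairs, and the sign from $((0,0,1),(0,1,0))$ all agree with the paper's computation.
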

	\begin{proof}
		By virtue of \eqref{defin-polyn-scal-prod-grad-homog-polyn-H^1-spher-coord}, it holds
		\begin{equation}\label{rewriting-monoton-condit-J_u^H^1-1}
			\begin{split}
				&T_{2,1}(\sigma)=  \sum_{\stackrel{\beta,\hspace{0.025cm}\gamma,\hspace{0.1cm}|\beta|+\beta_3=2,}{|\gamma|+\gamma_3=1}}b_{\beta}b_{\gamma}\sigma^{\beta+\gamma}(\beta_1\gamma_1 \sigma_x^{-2}+\beta_2 \gamma_2 \sigma_y^{-2}+2(\beta_1\gamma_3+\beta_3\gamma_1)\sigma_x^{-1}\sigma_y\\
				&\sigma_t^{-1}-2(\beta_2\gamma_3+\beta_3\gamma_2)\sigma_x\sigma_y^{-1}\sigma_t^{-1}+4\beta_3\gamma_3(\sigma_x^2+\sigma_y^2)\sigma_t^{-2}).\\  
			\end{split}
		\end{equation}
		In particular, we remark that we can not have, at the same time, $|\gamma|+\gamma_3=1$ and $\gamma_3\neq 0,$ because if $\gamma_3\neq 0,$ $|\gamma|+\gamma_3\ge 2.$ Therefore, the terms in \eqref{rewriting-monoton-condit-J_u^H^1-1} all satisfy $\gamma_3=0$ and the 
		only possibilities for $\gamma$ are $\gamma=(1,0,0)$ and $\gamma=(0,1,0).$ So, we can simplify \eqref{rewriting-monoton-condit-J_u^H^1-1} and we reach
		\begin{equation}\label{rewriting-monoton-condit-J_u^H^1-2}
			\begin{split}
				&T_{2,1}(\sigma)=  \sum_{\beta,\hspace{0.1cm}|\beta|+\beta_3=2}b_{\beta}b_{(1,0,0)}\sigma^{\beta+(1,0,0)}(\beta_1\sigma_x^{-2}+2\beta_3\sigma_x^{-1}\sigma_y\sigma_t^{-1})\\
				&+\sum_{\beta,\hspace{0.1cm}|\beta|+\beta_3=2}b_{\beta}b_{(0,1,0)}\sigma^{\beta+(0,1,0)}(\beta_2\sigma_y^{-2}-2\beta_3\sigma_x\sigma_y^{-1}\sigma_t^{-1}).\\  
			\end{split}
		\end{equation}
		Now, we first focus on 
		\begin{equation}\label{first-polyn-monoton-condit-J_u^H^1}
			\sum_{\beta,\hspace{0.1cm}|\beta|+\beta_3=2}b_{\beta}b_{(1,0,0)}\sigma^{\beta+(1,0,0)}(\beta_1\sigma_x^{-2}+2\beta_3\sigma_x^{-1}\sigma_y\sigma_t^{-1}).
		\end{equation}
		Since $|\beta|+\beta_3=2,$ it can hold $\beta_1=1$ or $\beta_1=2.$ If $\beta_1=2,$ $\beta$ has to be \linebreak $\beta=(2,0,0),$ while if $\beta_1=1,$ we have $\beta=(1,1,0),$ otherwise $|\beta|+\beta_3\ge 3.$ In parallel, to obtain $\beta_3\neq 0,$ the only possible choice is $\beta=(0,0,1),$ otherwise again $|\beta|+\beta_3\ge 3.$ To recap, in \eqref{first-polyn-monoton-condit-J_u^H^1} we have the terms
		\begin{align*}
			&b_{(2,0,0)}b_{(1,0,0)}\sigma^{(3,0,0)}2\sigma_x^{-2}+b_{(1,1,0)}b_{(1,0,0)}\sigma^{(2,1,0)}\sigma_x^{-2}+2b_{(0,0,1)}b_{(1,0,0)}\sigma^{(1,0,1)}\sigma_x^{-1}\sigma_y\\
			&\sigma_t^{-1}=2b_{(2,0,0)}b_{(1,0,0)}\sigma_x+b_{(1,1,0)}b_{(1,0,0)}\sigma_y+2b_{(0,0,1)}b_{(1,0,0)}\sigma_y,
		\end{align*}
		in other words
		\begin{equation}\label{semplif-first-polyn-monoton-condit-J_u^H^1-1}
			2b_{(2,0,0)}b_{(1,0,0)}\sigma_x+b_{(1,0,0)}(b_{(1,1,0)}+2b_{(0,0,1)})\sigma_y.
		\end{equation}
		Next, let us look at 
		\begin{equation*}
			\sum_{\beta,\hspace{0.1cm}|\beta|+\beta_3=2}b_{\beta}b_{(0,1,0)}\sigma^{\beta+(0,1,0)}(\beta_2\sigma_y^{-2}-2\beta_3\sigma_x\sigma_y^{-1}\sigma_t^{-1}),
		\end{equation*}
		and since the structure is very similar to \eqref{first-polyn-monoton-condit-J_u^H^1}, we can repeat the same considerations with $\beta_2$ instead of $\beta_1$ and we have
		\begin{align*}
			&b_{(0,2,0)}b_{(0,1,0)}\sigma^{(0,3,0)}2\sigma_y^{-2}+b_{(1,1,0)}b_{(0,1,0)}\sigma^{(1,2,0)}\sigma_y^{-2}-2b_{(0,0,1)}b_{(0,1,0)}\sigma^{(0,1,1)}\sigma_x\sigma_y^{-1}\\
			&\sigma_t^{-1}=2b_{(0,2,0)}b_{(0,1,0)}\sigma_y+b_{(1,1,0)}b_{(0,1,0)}\sigma_x-2b_{(0,0,1)}b_{(0,1,0)}\sigma_x,
		\end{align*}
		i.e.
		\begin{equation}\label{semplif-first-polyn-monoton-condit-J_u^H^1-2}
			2b_{(0,2,0)}b_{(0,1,0)}\sigma_y+b_{(0,1,0)}(b_{(1,1,0)}-2b_{(0,0,1)})\sigma_x.
		\end{equation}
		Consequently, according to \eqref{semplif-first-polyn-monoton-condit-J_u^H^1-1} and \eqref{semplif-first-polyn-monoton-condit-J_u^H^1-2}, \eqref{rewriting-monoton-condit-J_u^H^1-2} becomes
		\begin{align*}
			&T_{2,1}(\sigma)=2b_{(2,0,0)}b_{(1,0,0)}\sigma_x+b_{(1,0,0)}(b_{(1,1,0)}+2b_{(0,0,1)})\sigma_y+2b_{(0,2,0)}b_{(0,1,0)}\sigma_y\\
			&+b_{(0,1,0)}(b_{(1,1,0)}-2b_{(0,0,1)})\sigma_x=(2b_{(2,0,0)}b_{(1,0,0)}+b_{(0,1,0)}(b_{(1,1,0)}-2b_{(0,0,1)}))\sigma_x\\
			&(2b_{(0,2,0)}b_{(0,1,0)}+b_{(1,0,0)}(b_{(1,1,0)}+2b_{(0,0,1)}))\sigma_y.
		\end{align*}
		Recalling \eqref{defin-a_k-a_h,k-funct-J_u^H^1}, we then achieve the desired expression of $a_{2,1}^{\mathbb{H}^1}.$
	\end{proof}
	\begin{rem}
		Looking into \eqref{rewriting-monoton-condit-J_u^H^1-final} carefully, we point out that each term is, up to a constant, of the form
		\[\int_{\partial B^{\mathbb{H}^1}_1(0)}\frac{\sigma_i}{\sqrt{\sigma_x^2+\sigma_y^2}}
		\hspace{0.05cm}d\sigma_{\mathbb{H}^1},\]
		with $i=x,$ or $i=y.$ In particular, since $\partial B^{\mathbb{H}^1}_1(0)$ is symmetric with respect to the $i$-th direction, $i=x,y,$ it holds
		\begin{equation}\label{integr-ith-coord-weight-=-0}
			\int_{\partial B^{\mathbb{H}^1}_1(0)}\frac{\sigma_i}{\sqrt{\sigma_x^2+\sigma_y^2}}
			\hspace{0.05cm}d\sigma_{\mathbb{H}^1}=0.
		\end{equation}
		As a consequence, we get $a_{2,1}^{\mathbb{H}^1}=0.$
	\end{rem}
	This fact forces us to revisit the condition for the monotonicity behavior of $\mathcal{I}_{u}^{\mathbb{H}^1}$ around $r=0.$ Indeed, $a_{2,1}^{\mathbb{H}^1}=0$ tells us, from the proof of Proposition \ref{prop-limit-r-tend-to-0-monotonic-J_u^H^1}, that $\lim\limits_{r\rightarrow 0}(\mathcal{I}_{u}^{\mathbb{H}^1})'(r)=0,$ which does not yield a sign of $(\mathcal{I}_{u}^{\mathbb{H}^1})'$ around $r=0.$\\
	Let us then analyze $(\mathcal{I}_{u}^{\mathbb{H}^1})'$ more. To this end, looking at the proof of Proposition \ref{prop-limit-r-tend-to-0-monotonic-J_u^H^1}, we recall that we have
	\begin{align}\label{expression-derivat-J_u^H^1-1}
		(\mathcal{I}_{u}^{\mathbb{H}^1})'(r)=\sum_{k=2}^{\infty}2(k-1)r^{2k-3}a_{k}^{\mathbb{H}^1}+\sum_{\stackrel{h,k=1}{h\neq k}}^{\infty}(h+k-2)r^{h+k-3}a_{h,k}^{\mathbb{H}^1}.
	\end{align}
	Hence, for small radii the terms with $r$ to power $1$ are the ones which establish the sign of $(\mathcal{I}_{u}^{\mathbb{H}^1})'.$ Let us focus  on these terms. Precisely, we obtain the next result.
	\begin{prop}\label{prop-monoton-behav-I_u^H^1-bis}
		Let $a_{2}^{\mathbb{H}^1}$ and $a_{3,1}^{\mathbb{H}^1}$ be as in \eqref{defin-a_k-a_h,k-funct-J_u^H^1}. We have that $a_{2}^{\mathbb{H}^1}$ is nonnegative, whereas $a_{3,1}^{\mathbb{H}^1}$ depends on the function $u$ defining $\mathcal{I}_{u}^{\mathbb{H}^1}.$
	\end{prop}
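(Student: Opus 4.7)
The nonnegativity of $a_{2}^{\mathbb{H}^1}$ is immediate from \eqref{square-norm-grad-homog-polyn-H^1-spher-coord}: evaluating that identity at $s=1$ gives $Q_2(\sigma)=|(\nabla_{\mathbb{H}^1}P_2)(\sigma)|^2\ge 0$, and since the weight $1/\sqrt{\sigma_x^2+\sigma_y^2}$ is positive wherever it is defined on $\partial B^{\mathbb{H}^1}_1(0)$, the integrand in the definition \eqref{defin-a_k-a_h,k-funct-J_u^H^1} of $a_2^{\mathbb{H}^1}$ is nonnegative. Integration then yields $a_2^{\mathbb{H}^1}\ge 0$.

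For the second assertion, the plan is to expand $T_{3,1}(\sigma)$ explicitly from \eqref{defin-polyn-scal-prod-grad-homog-polyn-H^1-spher-coord}. The enumeration of admissible multi-indices is short: the six $\beta$'s with $|\beta|+\beta_3=3$ are $(3,0,0),(2,1,0),(1,2,0),(0,3,0),(1,0,1),(0,1,1)$, and the only two $\gamma$'s with $|\gamma|+\gamma_3=1$ are $(1,0,0)$ and $(0,1,0)$ (as already noted in the $T_{2,1}$ analysis, $\gamma_3\ne 0$ forces $|\gamma|+\gamma_3\ge 2$). Substituting each pair $(\beta,\gamma)$ into $q_{3,1}(\beta,\gamma,\sigma)$ and multiplying by $\sigma^{\beta+\gamma}$ yields $T_{3,1}$ as a polynomial of degree at most four in $(\sigma_x,\sigma_y,\sigma_t)$ whose coefficients are bilinear in the Taylor coefficients $b_\beta$ of $u$.

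Next I integrate against the weight $1/\sqrt{\sigma_x^2+\sigma_y^2}$ on $\partial B^{\mathbb{H}^1}_1(0)$, exploiting the symmetries of the Koranyi gauge under the three independent reflections $\sigma_x\mapsto-\sigma_x$, $\sigma_y\mapsto-\sigma_y$, $\sigma_t\mapsto-\sigma_t$ (and the $(x,y)$-rotational invariance). Precisely as in \eqref{integr-ith-coord-weight-=-0}, any monomial with an odd exponent in at least one of the three coordinates integrates to zero, so only the even-even-even monomials survive; at the relevant degrees these are just $\sigma_x^2$ and $\sigma_y^2$, whose integrals are equal by rotation, say to a common constant $C_2>0$. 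This reduces $a_{3,1}^{\mathbb{H}^1}$ to an explicit multiple of $C_2$ times a bilinear combination of $b_{(1,0,0)},b_{(0,1,0)}$ with $b_{(3,0,0)},b_{(1,2,0)},b_{(0,1,1)},b_{(2,1,0)},b_{(0,3,0)},b_{(1,0,1)}$.

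Finally, to certify that $a_{3,1}^{\mathbb{H}^1}$ genuinely varies with $u$, I compare two test cases. For $u=x$ only $b_{(1,0,0)}=1$ is nonzero, the bilinear combination vanishes, and $a_{3,1}^{\mathbb{H}^1}=0$. For $u=x-3yt-2x^3$, which is $\mathbb{H}^1$-harmonic, the nonzero Taylor coefficients are $b_{(1,0,0)}=1$, $b_{(0,1,1)}=-3$, $b_{(3,0,0)}=-2$; substitution into the explicit bilinear expression produces a strictly negative multiple of $C_2$, foreshadowing the computation that underlies Theorem~\ref{maintheorem}. The only real obstacle here is careful bookkeeping: no individual surface integral needs to be evaluated once the parity argument is in place, but one must be attentive not to discard a surviving monomial or miscount one of the six admissible $\beta$'s when writing $T_{3,1}$ out in full.
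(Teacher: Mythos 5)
Your proposal is correct, but it does not follow the paper's own route, which is worth spelling out. For the nonnegativity of $a_{2}^{\mathbb{H}^1}$ you invoke \eqref{square-norm-grad-homog-polyn-H^1-spher-coord} at $s=1$ to get $Q_2(\sigma)=|(\nabla_{\mathbb{H}^1}P_2)(\sigma)|^2\ge 0$ pointwise and conclude by positivity of the weight; the paper instead splits the monomials of $Q_2$ into squares and terms carrying an odd exponent and kills the latter via the parity identity \eqref{integr-ith-coord-weight-=-0}. Your argument is the more economical of the two (no symmetry needed, only positivity of the integrand), and both are valid. For the second assertion, the paper's proof is purely qualitative: it observes that the even monomials surviving in $T_{3,1}$ carry coefficients of the form $b_\beta b_\gamma$, mixing Taylor coefficients of $P_1$ and $P_3$, so their sign is not determined a priori. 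You instead carry out what the paper postpones to the subsequent Corollary: you enumerate the admissible multi-indices (your list of six $\beta$'s and two $\gamma$'s is exactly right and matches \eqref{semplif-first-term-T_3,1}--\eqref{semplif-second-term-T_3,1}), reduce by the reflection and rotation symmetries so that only the $\sigma_x^2$ and $\sigma_y^2$ monomials contribute, and then certify genuine dependence on $u$ by comparing $u=x$ (all cubic coefficients vanish, so $a_{3,1}^{\mathbb{H}^1}=0$) with the $\mathbb{H}^1$-harmonic $u=x-3yt-2x^3$ (where $b_{(1,0,0)}=1$, $b_{(3,0,0)}=-2$, $b_{(0,1,1)}=-3$ give $a_{3,1}^{\mathbb{H}^1}=-3C_2<0$, consistent with the paper's value $a_{3,1}^{\mathbb{H}^1}=\tfrac{b_{(0,1,1)}}{2}\int_{\partial B^{\mathbb{H}^1}_1(0)}\sqrt{\sigma_x^2+\sigma_y^2}\,d\sigma_{\mathbb{H}^1}$). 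What this buys: exhibiting two harmonic functions with different values of $a_{3,1}^{\mathbb{H}^1}$ is a sharper justification of the (admittedly informal) statement ``depends on $u$'' than the paper's sign remark, at the cost of front-loading the bookkeeping that the paper reserves for the Corollary, where the harmonicity constraints \eqref{cond-harm-homog-polyn-degree-3-H^1} are also brought in. Only cosmetic imprecisions remain in your write-up (e.g.\ the surviving monomials of $T_{3,1}$ actually have degree at most two, not four), and none affects the argument.
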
 
	\begin{proof}
		First, we rewrite \eqref{expression-derivat-J_u^H^1-1} to obtain
		\begin{equation}\label{expression-derivat-J_u^H^1-2}
			\begin{split}
				&(\mathcal{I}_{u}^{\mathbb{H}^1})'(r)=2a_{2}^{\mathbb{H}^1}r+\sum_{k=3}^{\infty}2(k-1)r^{2k-3}a_{k}^{\mathbb{H}^1}+2a_{3,1}^{\mathbb{H}^1}r+2a_{1,3}^{\mathbb{H}^1}r\\
				&+\sum_{\stackrel{h,k=2}{h\neq k}}^{\infty}(h+k-2)r^{h+k-3}a_{h,k}^{\mathbb{H}^1}=2(a_{2}^{\mathbb{H}^1}+2a_{3,1}^{\mathbb{H}^1})r +\sum_{k=3}^{\infty}2(k-1)r^{2k-3}a_{k}^{\mathbb{H}^1}\\
				&+\sum_{\stackrel{h,k=2}{h\neq k}}^{\infty}(h+k-2)r^{h+k-3}a_{h,k}^{\mathbb{H}^1},\\
			\end{split}
		\end{equation}
		where we have exploited again that $a_{3,1}^{\mathbb{H}^1}=a_{1,3}^{\mathbb{H}^1}$ by \eqref{defin-a_k-a_h,k-funct-J_u^H^1}. We want to study at this point	$a_{2}^{\mathbb{H}^1}+2a_{3,1}^{\mathbb{H}^1}.$\\
		Looking into \eqref{defin-a_k-a_h,k-funct-J_u^H^1}, we note that $a_{2}^{\mathbb{H}^1}$ involves the polynomial $Q_2,$ see \eqref{defin-polyn-square-norm-grad-homog-polyn-H^1-spher-coord}. This polynomial is made up of monomials which are either positive or with at least one of the variables with odd exponent. Since \eqref{integr-ith-coord-weight-=-0} is true in the same way for monomials with at least one of the variables with odd exponent, just the positive monomials give a contribution in $a_{2}^{\mathbb{H}^1}.$ Consequently, $a_{2}^{\mathbb{H}^1}$ is nonnegative.\\
		In parallel, if we look at the definition of $a_{3,1}^{\mathbb{H}^1},$ again in \eqref{defin-a_k-a_h,k-funct-J_u^H^1}, the situation is similar, but with the difference that we have monomials with all the variables with even exponent where the coefficients are in the form $b_{\beta}b_{\gamma}.$ Thus the sign of such monomials depends on the sign of $b_{\beta}b_{\gamma}.$ In other words, in principle, the sign of $a_{3,1}^{\mathbb{H}^1}$ is related to the specific function $u$ which defines $\mathcal{I}_{u}^{\mathbb{H}^1}.$
	\end{proof}
	\begin{cor}
		Let $u=x-3yt-2x^3.$ Then, we have $a_{2}^{\mathbb{H}^1}+2a_{3,1}^{\mathbb{H}^1}<0.$
	\end{cor}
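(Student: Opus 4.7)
The plan is to compute $a_2^{\mathbb{H}^1}$ and $a_{3,1}^{\mathbb{H}^1}$ directly from the definitions in \eqref{defin-a_k-a_h,k-funct-J_u^H^1}, exploiting the fact that the given $u$ has only three nonzero coefficients in its expansion into Heisenberg-homogeneous monomials. First I would record the homogeneous decomposition: with the Heisenberg degree $|\beta|+\beta_3$, the monomial $x$ has degree $1$, while $yt$ (with $\beta=(0,1,1)$) and $x^3$ both have degree $3$. Thus $u=P_1+P_3$ with $P_1=x$, $P_3=-3yt-2x^3$, and $P_k\equiv 0$ for $k\neq 1,3$. The only nonzero coefficients are $b_{(1,0,0)}=1$, $b_{(0,1,1)}=-3$, $b_{(3,0,0)}=-2$.

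Since $Q_2$ in \eqref{defin-polyn-square-norm-grad-homog-polyn-H^1-spher-coord} is built from the coefficients $b_\beta$ with $|\beta|+\beta_3=2$, all of which vanish for our $u$, we immediately obtain $Q_2\equiv 0$, hence $a_2^{\mathbb{H}^1}=0$. Thus the claim reduces to showing $a_{3,1}^{\mathbb{H}^1}<0$. For this I would unwind $T_{3,1}$ in \eqref{defin-polyn-scal-prod-grad-homog-polyn-H^1-spher-coord}: the only $\gamma$ with $|\gamma|+\gamma_3=1$ and $b_\gamma\neq 0$ is $\gamma=(1,0,0)$ (since $b_{(0,1,0)}=0$), and the only $\beta$'s with $|\beta|+\beta_3=3$ and $b_\beta\neq 0$ are $(3,0,0)$ and $(0,1,1)$. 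This leaves exactly two terms in the sum defining $T_{3,1}$.

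Evaluating $q_{3,1}(\beta,\gamma,\sigma)$ on each of these pairs is a mechanical substitution. For $\beta=(3,0,0),\gamma=(1,0,0)$ only the $\beta_1\gamma_1\sigma_x^{-2}$ term survives, giving a contribution $(-2)(1)\sigma_x^4(3\sigma_x^{-2})=-6\sigma_x^2$. For $\beta=(0,1,1),\gamma=(1,0,0)$ only the $(\beta_1\gamma_3+\beta_3\gamma_1)\sigma_x^{-1}\sigma_y\sigma_t^{-1}$ term survives, giving $(-3)(1)\sigma_x\sigma_y\sigma_t\cdot 2\sigma_x^{-1}\sigma_y\sigma_t^{-1}=-6\sigma_y^2$. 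Adding the two contributions yields the clean expression
\begin{equation*}
T_{3,1}(\sigma)=-6(\sigma_x^2+\sigma_y^2).
\end{equation*}
Substituting into \eqref{defin-a_k-a_h,k-funct-J_u^H^1} I would then conclude
\begin{equation*}
a_{3,1}^{\mathbb{H}^1}=-\frac{3}{2}\int_{\partial B^{\mathbb{H}^1}_1(0)}\sqrt{\sigma_x^2+\sigma_y^2}\,d\sigma_{\mathbb{H}^1}<0,
\end{equation*}
since the integrand is strictly positive away from the two characteristic points of $\partial B^{\mathbb{H}^1}_1(0)$. Combined with $a_2^{\mathbb{H}^1}=0$, this gives $a_2^{\mathbb{H}^1}+2a_{3,1}^{\mathbb{H}^1}<0$.

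The only real obstacle is bookkeeping: correctly identifying which $(\beta,\gamma)$ pairs contribute, and which terms of $q_{3,1}$ survive given the many zero entries of $\beta$ and $\gamma$. The computation is otherwise mechanical, and it is the pleasant cancellation-free combination of the $x^3$ contribution and the $yt$ contribution that yields a sign-definite expression proportional to $\sigma_x^2+\sigma_y^2$, which explains the choice of the counterexample $u=x-3yt-2x^3$.
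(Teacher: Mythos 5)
Your computation is correct, and it reproduces exactly the paper's final values: $a_2^{\mathbb{H}^1}=0$ because $P_2\equiv 0$, and $a_{3,1}^{\mathbb{H}^1}=-\tfrac{3}{2}\int_{\partial B^{\mathbb{H}^1}_1(0)}\sqrt{\sigma_x^2+\sigma_y^2}\,d\sigma_{\mathbb{H}^1}<0$. The route differs in a noteworthy way, though. You specialize immediately to $u=x-3yt-2x^3$, so the sum defining $T_{3,1}$ in \eqref{defin-polyn-scal-prod-grad-homog-polyn-H^1-spher-coord} collapses to the two pairs $(\beta,\gamma)=((3,0,0),(1,0,0))$ and $((0,1,1),(1,0,0))$, giving the pointwise sign-definite $T_{3,1}(\sigma)=-6(\sigma_x^2+\sigma_y^2)$; as a result you never need the symmetry cancellations on the Koranyi sphere nor any structural input about $P_3$. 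The paper instead keeps the coefficients of $P_1$ and $P_3$ general, imposes the $\mathbb{H}^1$-harmonicity constraints $3b_{(3,0,0)}+b_{(1,2,0)}-2b_{(0,1,1)}=0$, $3b_{(0,3,0)}+b_{(2,1,0)}+2b_{(1,0,1)}=0$, fixes $b_{(1,0,0)}=1$, $b_{(0,1,0)}=0$, and uses the symmetry of $\partial B^{\mathbb{H}^1}_1(0)$ (to discard odd-exponent terms and the $b_{(1,2,0)}(\sigma_y^2-\sigma_x^2)$ contribution) to arrive at the formula $a_{3,1}^{\mathbb{H}^1}=\tfrac{b_{(0,1,1)}}{2}\int_{\partial B^{\mathbb{H}^1}_1(0)}\sqrt{\sigma_x^2+\sigma_y^2}\,d\sigma_{\mathbb{H}^1}$, valid for a whole family of admissible $u$, and only then plugs in $b_{(0,1,1)}=-3$. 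What your direct verification buys is brevity and the absence of any cancellation argument; what the paper's version buys is the general sign criterion in terms of $b_{(0,1,1)}$, which is precisely what motivates the choice $u=x-3yt-2x^3$ in the first place (your closing remark about the "choice of the counterexample" is, in the paper, a theorem-level consequence rather than an observation). Since both $a_2^{\mathbb{H}^1}$ and $a_{3,1}^{\mathbb{H}^1}$ are defined through the expansion of $u$ alone, your bypassing of the harmonicity conditions creates no gap for this corollary; harmonicity of $u$ is checked separately in Lemma \ref{lemma-expl-count}.
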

		\begin{proof}
		We first analyze the explicit expression of $T_{3,1},$ defined in \eqref{defin-polyn-scal-prod-grad-homog-polyn-H^1-spher-coord}. Specifically, we have
	\[
	\begin{split}
		&T_{3,1}(\sigma)= \sum_{\stackrel{\beta,\hspace{0.025cm}\gamma,\hspace{0.1cm}|\beta|+\beta_3=3,}{|\gamma|+\gamma_3=1}}b_{\beta}b_{\gamma}\sigma^{\beta+\gamma}(\beta_1\gamma_1 \sigma_x^{-2}+\beta_2 \gamma_2 \sigma_y^{-2}+2(\beta_1\gamma_3+\beta_3\gamma_1)\sigma_x^{-1}\sigma_y\\
		&\sigma_t^{-1}-2(\beta_2\gamma_3+\beta_3\gamma_2)\sigma_x\sigma_y^{-1}\sigma_t^{-1}+4\beta_3\gamma_3(\sigma_x^2+\sigma_y^2)\sigma_t^{-2}).
	\end{split}
	\]
	Repeating the same argument used to reach \eqref{rewriting-monoton-condit-J_u^H^1-2}, this becomes
	\begin{equation}\label{rewriting-T_3,1-1}
		\begin{split}
			&T_{3,1}(\sigma)=  \sum_{\beta,\hspace{0.1cm}|\beta|+\beta_3=3}b_{\beta}b_{(1,0,0)}\sigma^{\beta+(1,0,0)}(\beta_1\sigma_x^{-2}+2\beta_3\sigma_x^{-1}\sigma_y\sigma_t^{-1})\\
			&+\sum_{\beta,\hspace{0.1cm}|\beta|+\beta_3=3}b_{\beta}b_{(0,1,0)}\sigma^{\beta+(0,1,0)}(\beta_2\sigma_y^{-2}-2\beta_3\sigma_x\sigma_y^{-1}\sigma_t^{-1}).\\  
		\end{split}
	\end{equation}
	Let us focus first on
	\[
	\sum_{\beta,\hspace{0.1cm}|\beta|+\beta_3=3}b_{\beta}b_{(1,0,0)}\sigma^{\beta+(1,0,0)}(\beta_1\sigma_x^{-2}+2\beta_3\sigma_x^{-1}\sigma_y\sigma_t^{-1}),
	\]
	and arguing in a similar way as before for obtaining \eqref{semplif-first-polyn-monoton-condit-J_u^H^1-1}, it holds
	\begin{equation}\label{semplif-first-term-T_3,1}
		\begin{split}
			&\sum_{\beta,\hspace{0.1cm}|\beta|+\beta_3=3}b_{\beta}b_{(1,0,0)}\sigma^{\beta+(1,0,0)}(\beta_1\sigma_x^{-2}+2\beta_3\sigma_x^{-1}\sigma_y\sigma_t^{-1})=2b_{(2,1,0)}b_{(1,0,0)}\sigma_x\sigma_y\\
			&+b_{(1,2,0)}b_{(1,0,0)}\sigma_y^2+3b_{(3,0,0)}b_{(1,0,0)}\sigma_x^2+2b_{(1,0,1)}b_{(1,0,0)}\sigma_x\sigma_y+b_{(1,0,1)}b_{(1,0,0)}\sigma_t\\
			&+2b_{(0,1,1)}b_{(1,0,0)}\sigma_y^2=3b_{(3,0,0)}b_{(1,0,0)}\sigma_x^2+b_{(1,0,0)}(2b_{(0,1,1)}+b_{(1,2,0)})\sigma_y^2\\
			&+2b_{(1,0,0)}(b_{(2,1,0)}+b_{(1,0,1)})\sigma_x\sigma_y+b_{(1,0,1)}b_{(1,0,0)}\sigma_t,
		\end{split}
	\end{equation}
	while for
	\[\sum_{\beta,\hspace{0.1cm}|\beta|+\beta_3=3}b_{\beta}b_{(0,1,0)}\sigma^{\beta+(0,1,0)}(\beta_2\sigma_y^{-2}-2\beta_3\sigma_x\sigma_y^{-1}\sigma_t^{-1}),\]
	we achieve
	\begin{equation}\label{semplif-second-term-T_3,1}
		\begin{split}
			&\sum_{\beta,\hspace{0.1cm}|\beta|+\beta_3=3}b_{\beta}b_{(0,1,0)}\sigma^{\beta+(0,1,0)}(\beta_2\sigma_y^{-2}-2\beta_3\sigma_x\sigma_y^{-1}\sigma_t^{-1})=b_{(2,1,0)}b_{(0,1,0)}\sigma_x^2\\
			&+2b_{(1,2,0)}b_{(0,1,0)}\sigma_x\sigma_y+3b_{(0,3,0)}b_{(0,1,0)}\sigma_y^2-2b_{(0,1,1)}b_{(0,1,0)}\sigma_x\sigma_y+b_{(0,1,1)}b_{(0,1,0)}\sigma_t\\
			&-2b_{(1,0,1)}b_{(0,1,0)}\sigma_x^2=b_{(0,1,0)}(b_{(2,1,0)}-2b_{(1,0,1)})\sigma_x^2+3b_{(0,3,0)}b_{(0,1,0)}\sigma_y^2\\
			&+2b_{(0,1,0)}(b_{(1,2,0)}-b_{(0,1,1)})\sigma_x\sigma_y+b_{(0,1,1)}b_{(0,1,0)}\sigma_t.
		\end{split}
	\end{equation}
	Now, the terms in \eqref{semplif-first-term-T_3,1} and \eqref{semplif-second-term-T_3,1} with $\sigma_x\sigma_y$ and $\sigma_t$ give null contribution in $a_{3,1}^{\mathbb{H}^1},$ for \eqref{integr-ith-coord-weight-=-0} extended to monomials with at least one of the variables with odd exponent. Consequently, in $a_{3,1}^{\mathbb{H}^1}$ it is sufficient to consider 
	\begin{equation}\label{semplif-T_3,1}
		\begin{split}
			&3b_{(3,0,0)}b_{(1,0,0)}\sigma_x^2+b_{(1,0,0)}(2b_{(0,1,1)}+b_{(1,2,0)})\sigma_y^2+b_{(0,1,0)}(b_{(2,1,0)}-2b_{(1,0,1)})\sigma_x^2\\
			&+3b_{(0,3,0)}b_{(0,1,0)}\sigma_y^2=(3b_{(3,0,0)}b_{(1,0,0)}+b_{(2,1,0)}b_{(0,1,0)}-2b_{(1,0,1)}b_{(0,1,0)})\sigma_x^2\\
			&+(3b_{(0,3,0)}b_{(0,1,0)}+b_{(1,2,0)}b_{(1,0,0)}+2b_{(0,1,1)}b_{(1,0,0)})\sigma_y^2.
		\end{split}
	\end{equation}
	We notice that $b_{(1,0,0)},$ $b_{(0,1,0)},$ and $b_{(3,0,0)},$ $b_{(2,1,0)},$ $b_{(1,0,1)},$ $b_{(0,3,0)},$ $b_{(1,2,0)}$ $b_{(0,1,1)}$ are the coefficients of $P_1$ and $P_3$ respectively, where $P_1$ and $P_3$ are the $\mathbb{H}^1$-harmonic homogeneous polynomials of degree $1$ and degree $3$ in the expansion of $u.$\\
	Therefore, to investigate the behavior of $a_{3,1}^{\mathbb{H}^1},$ we need to choose the coefficients of $P_1$ and $P_3$ in such a way that they are both $\mathbb{H}^1$-harmonic. Let us focus on these choices.\\
	Specifically, $b_{(1,0,0)}$ and $b_{(0,1,0)}$ are the coefficients of $x$ and $y$ respectively, which are $\mathbb{H}^1$-harmonic, thus we do not have particular conditions on them. About the coefficients of $P_3,$ we directly compute which relationships they have to satisfy. We get
	\begin{align*}
		&XP_3=3b_{(3,0,0)}x^2+2b_{(2,1,0)}xy+b_{(1,0,1)}t+b_{(1,2,0)}y^2+2b_{(1,0,1)}xy+2b_{(0,1,1)}y^2,\\
		&YP_3=b_{(2,1,0)}x^2+3b_{(0,3,0)}y^2+2b_{(1,2,0)}xy+b_{(0,1,1)}t-2b_{(1,0,1)}x^2-2b_{(0,1,1)}xy,
	\end{align*}
	which gives
	\begin{align*}
		&X^2P_3=6b_{(3,0,0)}x+2b_{(2,1,0)}y+2b_{(1,0,1)}y+2b_{(1,0,1)}y=6b_{(3,0,0)}x+2b_{(2,1,0)}y\\
		&+4b_{(1,0,1)}y,\\
		&Y^2P_3=6b_{(0,3,0)}y+2b_{(1,2,0)}x-2b_{(0,1,1)}x-2b_{(0,1,1)}x=6b_{(0,3,0)}y+2b_{(1,2,0)}x\\
		&-4b_{(0,1,1)}x.
	\end{align*}
	This implies
	\begin{align*}
		&\Delta_{\mathbb{H}^1}P_3=2(3b_{(3,0,0)}+b_{(1,2,0)}-2b_{(0,1,1)})x+2(3b_{(0,3,0)}+b_{(2,1,0)}+2b_{(1,0,1)})y.
	\end{align*}
	So, $P_3$ is $\mathbb{H}^1$-harmonic if the following conditions hold:
	\begin{equation}\label{cond-harm-homog-polyn-degree-3-H^1}
		\begin{cases}
			3b_{(3,0,0)}+b_{(1,2,0)}-2b_{(0,1,1)}=0,\\
			3b_{(0,3,0)}+b_{(2,1,0)}+2b_{(1,0,1)}=0.
		\end{cases}
	\end{equation}
	On the other hand, we note that,  in \eqref{semplif-T_3,1}, $b_{(1,0,0)}$ multiplies the coefficients which appear in the first condition of \eqref{cond-harm-homog-polyn-degree-3-H^1} and $b_{(0,1,0)}$ those in the second one. We choose then $b_{(1,0,0)}=1$ and $b_{(0,1,0)}=0.$ As a consequence, \eqref{semplif-T_3,1} is reduced to
	\[
	3b_{(3,0,0)}\sigma_x^2+(b_{(1,2,0)}+2b_{(0,1,1)})\sigma_y^2,	
	\]
	but we know that \eqref{cond-harm-homog-polyn-degree-3-H^1} needs to be satisfied, hence it holds
	\begin{align*}
		&(-b_{(1,2,0)}+2b_{(0,1,1)})\sigma_x^2+(b_{(1,2,0)}+2b_{(0,1,1)})\sigma_y^2=2b_{(0,1,1)}(\sigma_x^2+\sigma_y^2)\\
		&+b_{(1,2,0)}(\sigma_y^2-\sigma_x^2).
	\end{align*}
	In particular, because $\sigma_x^2$ and $\sigma_y^2$ assume the same values on subsets of $\partial B^{\mathbb{H}^1}_1(0)$ of the same measure, $b_{(1,2,0)}(\sigma_y^2-\sigma_x^2)$ gives null contribution in $a_{3,1}^{\mathbb{H}^1},$ thus we have
	\[
	\begin{split}
		&a_{3,1}^{\mathbb{H}^1}=\frac{1}{4}\int_{\partial B^{\mathbb{H}^1}_1(0)}\frac{T_{3,1}(\sigma)}{\sqrt{\sigma_x^2+\sigma_y^2}}
		\hspace{0.05cm}d\sigma_{\mathbb{H}^1}=\frac{1}{4}\int_{\partial B^{\mathbb{H}^1}_1(0)}\frac{2b_{(0,1,1)}(\sigma_x^2+\sigma_y^2)}{\sqrt{\sigma_x^2+\sigma_y^2}}
		\hspace{0.05cm}d\sigma_{\mathbb{H}^1}\\
		&=\frac{b_{(0,1,1)}}{2}\int_{\partial B^{\mathbb{H}^1}_1(0)}\sqrt{\sigma_x^2+\sigma_y^2}
		\hspace{0.1cm}d\sigma_{\mathbb{H}^1},
	\end{split}
	\]
	which yields
	\[a_{3,1}^{\mathbb{H}^1}=\frac{b_{(0,1,1)}}{2}\int_{\partial B^{\mathbb{H}^1}_1(0)}\sqrt{\sigma_x^2+\sigma_y^2}
	\hspace{0.1cm}d\sigma_{\mathbb{H}^1}.\]
    Taking now $u=x-3yt-2x^3,$ we then obtain $a_{3,1}^{\mathbb{H}^1}<0.$ Moreover, $a_{2}^{\mathbb{H}^1}=0$ according again to \eqref{defin-a_k-a_h,k-funct-J_u^H^1}. As a matter of fact, $Q_2$ is the square norm of the $\mathbb{H}^1$-gradient of the homogeneous polynomial of degree $2,$ $P_2,$ coming from the Taylor expansion of $u$ at $0,$ which is the null polynomial in case of $u=x-3yt-2x^3.$ Hence, we reach $a_{2}^{\mathbb{H}^1}+2a_{3,1}^{\mathbb{H}^1}<0.$
\end{proof}
	\section{Explicit computation of the counterexample}\label{explicit_computation}
	In this section we explicitly check that fixing the polynomial $$u=x-3yt-2x^3,$$ $\mathcal{I}_{u}^{\mathbb{H}^1}$ is monotone decreasing in a right neighborhood of $r=0.$
	\begin{lem}\label{lemma-expl-count}
		Let $u=x-3yt-2x^3.$ Then $\Delta_{\mathbb{H}^1}u=0$ and $\mathcal{I}_{u}^{\mathbb{H}^1}$ is monotone decreasing around $r=0.$
	\end{lem}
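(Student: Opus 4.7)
The plan is to combine three things: a direct verification that $u$ is $\mathbb{H}^1$-harmonic, the series expansion of $\mathcal{I}_{u}^{\mathbb{H}^1}$ from Proposition~\ref{prop-rewriting-J_u^H^1-final}, and the sign information collected in Proposition~\ref{prop-monoton-behav-I_u^H^1-bis} and its Corollary. First I would check harmonicity by a direct application of the formulae $X=\partial_x+2y\partial_t$ and $Y=\partial_y-2x\partial_t$: compute $Xu=1-6x^2-6y^2$ and $Yu=-3t+6xy$, then $X^2u=-12x$ and $Y^2u=12x$, so $\Delta_{\mathbb{H}^1}u=0$. This also makes it transparent that the Taylor decomposition of $u$ into $\mathbb{H}^1$-homogeneous harmonic polynomials (in the sense of Remark~\ref{remark-express-homog-polyn-H^1}) is $P_1(\xi)=x$, $P_2(\xi)\equiv 0$, $P_3(\xi)=-3yt-2x^3$, with $P_k\equiv 0$ for $k\geq 4$; in particular, each piece is $\mathbb{H}^1$-harmonic individually.

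Next I would plug this decomposition into the formula \eqref{rewriting-J_u^H^1-final}. Because only finitely many $P_k$ are nonzero, the series collapses to a polynomial in $r$, and the only coefficients that can contribute are those built out of $P_1$ and $P_3$. In particular, $Q_2(\sigma)=|\nabla_{\mathbb{H}^1}P_2(\sigma)|^2=0$, so $a_2^{\mathbb{H}^1}=0$, and the cross term $a_{2,1}^{\mathbb{H}^1}$ vanishes by the symmetry observation \eqref{integr-ith-coord-weight-=-0}. The only surviving nontrivial cross coefficient that matters for the derivative at small $r$ is $a_{3,1}^{\mathbb{H}^1}$, which by the computation in the proof of the corollary equals
\[
a_{3,1}^{\mathbb{H}^1}=\frac{b_{(0,1,1)}}{2}\int_{\partial B^{\mathbb{H}^1}_1(0)}\sqrt{\sigma_x^2+\sigma_y^2}\,d\sigma_{\mathbb{H}^1}.
\]
For our polynomial one reads off $b_{(1,0,0)}=1$, $b_{(0,1,0)}=0$, $b_{(3,0,0)}=-2$ and $b_{(0,1,1)}=-3$, so that $a_{3,1}^{\mathbb{H}^1}<0$.

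Finally I would differentiate the expansion \eqref{rewriting-J_u^H^1-final} as in \eqref{expression-derivat-J_u^H^1-2}. Using $a_2^{\mathbb{H}^1}=0$, $a_{2,1}^{\mathbb{H}^1}=0$ and $a_{3,1}^{\mathbb{H}^1}=a_{1,3}^{\mathbb{H}^1}$, the derivative reads
\[
(\mathcal{I}_{u}^{\mathbb{H}^1})'(r)=4\,a_{3,1}^{\mathbb{H}^1}\,r+O(r^2)\qquad \text{as } r\to 0^+,
\]
where the remainder collects the finitely many higher-order terms coming from the interactions of $P_1$ and $P_3$ with themselves (the pure term $r^{2\cdot 3-3}a_3^{\mathbb{H}^1}$ and the $h=k=3$ cross contributions, all of which are $O(r^3)$). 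Since the leading coefficient is strictly negative, there exists $r_0>0$ such that $(\mathcal{I}_{u}^{\mathbb{H}^1})'(r)<0$ on $(0,r_0)$, giving the claimed monotone decreasing behaviour.

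The only genuinely delicate step is the sign computation of $a_{3,1}^{\mathbb{H}^1}$: everything else is expansion and book-keeping. That sign, however, is already established in the proof of the Corollary preceding this lemma, and the harmonicity constraints \eqref{cond-harm-homog-polyn-degree-3-H^1} are exactly what is needed to reduce the surface integral on $\partial B_1^{\mathbb{H}^1}(0)$ to the manifestly positive integral $\int\sqrt{\sigma_x^2+\sigma_y^2}\,d\sigma_{\mathbb{H}^1}$, multiplied by $b_{(0,1,1)}=-3$. Hence the main work of this lemma is the bookkeeping that all other low-order terms either vanish (by $P_2\equiv 0$ or by parity on $\partial B_1^{\mathbb{H}^1}(0)$) or are subdominant as $r\to 0^+$.
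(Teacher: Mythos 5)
Your proposal is correct, and it rests on the same underlying mechanism as the paper: expand $\mathcal{I}_u^{\mathbb{H}^1}$ in powers of $r$ and observe that the coefficient of $r^2$, which comes from the interaction of $P_1=x$ with $P_3=-3yt-2x^3$, is strictly negative, so $(\mathcal{I}_u^{\mathbb{H}^1})'<0$ for small $r>0$. The difference is one of route: the paper's proof of this lemma is a self-contained explicit computation — it substitutes $Xu=1-6(x^2+y^2)$, $Yu=3(-t+2xy)$ into \eqref{rewriting-J_u^H^1-1}, integrates via the coarea formula and dilations, and obtains the exact closed form $\mathcal{I}_u^{\mathbb{H}^1}(r)=a_1^{\mathbb{H}^1}-2a_{3,1}^{\mathbb{H}^1}r^2+a_3^{\mathbb{H}^1}r^4$ (valid for all $r$, with all coefficients written as explicit surface integrals), from which the sign of the derivative near $0$ is immediate — whereas you assemble the conclusion from the general expansion of Proposition \ref{prop-rewriting-J_u^H^1-final} together with the coefficient computation in the corollary preceding the lemma, checking only the decomposition $u=P_1+P_3$, $P_2\equiv 0$, and the vanishing of the parity terms. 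Your route is shorter given the earlier results and makes transparent that only $a_{3,1}^{\mathbb{H}^1}<0$ matters; the paper's buys the exact polynomial expression. Two harmless slips to fix: in the cross sum one has $h\neq k$, so there are no ``$h=k=3$ cross contributions'' — the entire remainder in the derivative is the single pure term $4a_3^{\mathbb{H}^1}r^3$; and note that the paper's Section \ref{explicit_computation} relabels $a_{3,1}^{\mathbb{H}^1}$ with the opposite sign (writing $-2a_{3,1}^{\mathbb{H}^1}r^2$ with a positive constant), which is consistent with your $a_{3,1}^{\mathbb{H}^1}=\frac{b_{(0,1,1)}}{2}\int_{\partial B^{\mathbb{H}^1}_1(0)}\sqrt{\sigma_x^2+\sigma_y^2}\,d\sigma_{\mathbb{H}^1}<0$, $b_{(0,1,1)}=-3$, but should not be conflated with it.
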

	\begin{proof}
		First, recalling that $u$ defining $\mathcal{I}_{u}^{\mathbb{H}^1}$ has to be ${\mathbb{H}^1}$-harmonic, we show that $\Delta_{\mathbb{H}^1}u=0.$ Directly computing, we get
	\begin{equation}\label{nabla-H-^1-x-3yt-2x^3}
				\begin{split}
			    &Xu=1 -6x^2-6y^2,\\
				&Yu=3(-t+2xy),
				\end{split}
	\end{equation}
	which implies
	\[\Delta_{\mathbb{H}^1}u=-12x+12x=0.\]
    Now, we focus on the behavior of $\mathcal{I}_{u}^{\mathbb{H}^1}.$ Substituting \eqref{nabla-H-^1-x-3yt-2x^3} in \eqref{rewriting-J_u^H^1-1}, it holds
		\begin{align*}
			&\mathcal{I}_{u}^{\mathbb{H}^1}(r)=\frac{1}{r^2}\int_{B^{\mathbb{H}^1}_r(0)}\frac{(1 -6(x^2+y^2))^2+9(-t+2xy)^2}{|\xi|_{\mathbb{H}^1}^{2}}\hspace{0.1cm}d\xi.
		\end{align*}
	Next, repeating the same steps exploited to achieve \eqref{rewriting-J_u^H^1-2}, we have
		\begin{align*}
			&\mathcal{I}_{u}^{\mathbb{H}^1}(r)=\frac{1}{r^2}\bigg(\int^r_0\rho\bigg(\int_{\partial B^{\mathbb{H}^1}_1(0)}\frac{(1 -6\rho^2(\sigma_x^2+\sigma_y^2))^2}{\sqrt{\sigma_x^2+\sigma_y^2}}\hspace{0.1cm}d\sigma_{\mathbb{H}^1}\bigg)d\rho\\
			&+\int^r_0\rho\bigg(\int_{\partial B^{\mathbb{H}^1}_1(0)}\frac{9\rho^4(-\sigma_t+2\sigma_x\sigma_y)^2}{\sqrt{\sigma_x^2+\sigma_y^2}}\hspace{0.1cm}d\sigma_{\mathbb{H}^1}\bigg)d\rho\bigg)\\
			&=\frac{1}{r^2}\bigg(\int^r_0\rho\bigg(\int_{\partial B^{\mathbb{H}^1}_1(0)}\frac{1 }{\sqrt{\sigma_x^2+\sigma_y^2}}\hspace{0.1cm}d\sigma_{\mathbb{H}^1}\bigg)d\rho\\
			&-\int^r_012\rho^3\bigg(\int_{\partial B^{\mathbb{H}^1}_1(0)}\sqrt{\sigma_x^2+\sigma_y^2}\hspace{0.1cm}d\sigma_{\mathbb{H}^1}\bigg)d\rho\\
			&+\int^r_0\rho^5\bigg(\int_{\partial B^{\mathbb{H}^1}_1(0)}\bigg(36(\sigma_x^2+\sigma_y^2)^{3/2}+\frac{9(-\sigma_t+2\sigma_x\sigma_y)^2}{\sqrt{\sigma_x^2+\sigma_y^2}}\bigg)\hspace{0.1cm}d\sigma_{\mathbb{H}^1}\bigg)d\rho\bigg)\\
			&=\frac{1}{r^2}\bigg(\frac{r^2}{2}\int_{\partial B^{\mathbb{H}^1}_1(0)}\frac{1 }{\sqrt{\sigma_x^2+\sigma_y^2}}\hspace{0.1cm}d\sigma_{\mathbb{H}^1}-3r^4\int_{\partial B^{\mathbb{H}^1}_1(0)}\sqrt{\sigma_x^2+\sigma_y^2}\hspace{0.1cm}d\sigma_{\mathbb{H}^1}\\
			&+\frac{r^6}{6}\int_{\partial B^{\mathbb{H}^1}_1(0)}\bigg(36(\sigma_x^2+\sigma_y^2)^{3/2}+\frac{9(-\sigma_t+2\sigma_x\sigma_y)^2}{\sqrt{\sigma_x^2+\sigma_y^2}}\bigg)\hspace{0.1cm}d\sigma_{\mathbb{H}^1}\bigg),
		\end{align*}
		which yields
		\begin{equation}\label{express-J-u-H^1-counter}
			\mathcal{I}_{u}^{\mathbb{H}^1}(r)=a_1^{\mathbb{H}^1}-2a_{3,1}^{\mathbb{H}^1}r^2+a_3^{\mathbb{H}^1}r^4,
		\end{equation}
		with
		\begin{align*}
			&a_1^{\mathbb{H}^1}=\frac{1}{2}\int_{\partial B^{\mathbb{H}^1}_1(0)}\frac{1 }{\sqrt{\sigma_x^2+\sigma_y^2}}\hspace{0.1cm}d\sigma_{\mathbb{H}^1},\\
			&2a_{3,1}^{\mathbb{H}^1}= 3\int_{\partial B^{\mathbb{H}^1}_1(0)}\sqrt{\sigma_x^2+\sigma_y^2}\hspace{0.1cm}d\sigma_{\mathbb{H}^1},\\
			&a_3^{\mathbb{H}^1}= \int_{\partial B^{\mathbb{H}^1}_1(0)}\bigg(36(\sigma_x^2+\sigma_y^2)^{3/2}+\frac{9(-\sigma_t+2\sigma_x\sigma_y)^2}{\sqrt{\sigma_x^2+\sigma_y^2}}\bigg)\hspace{0.1cm}d\sigma_{\mathbb{H}^1}.
		\end{align*}
		Explicitly calculating the derivative of $\mathcal{I}_{u}^{\mathbb{H}^1}$ as in \eqref{express-J-u-H^1-counter} and letting $r\to 0,$ we reach the thesis since $a_{3,1}^{\mathbb{H}^1}$ is positive by definition. 
	\end{proof}
	\section{Nonexistence of an Alt-Caffarelli-Friedman type monotonicity formula in $\mathbb{H}^1$}\label{decreasing_formula}
	In this section, we show that an Alt-Caffarelli-Friedman type monotonicity formula in $\mathbb{H}^1$ does not hold, at least considering the function \eqref{moneqH}.
	In fact, we can exploit  the counterexample to the increasing monotonicity of $\mathcal{I}_{u}^{\mathbb{H}^1}$ provided in Lemma \ref{lemma-expl-count}. 
	\begin{proof}[Proof of Theorem \ref{maintheorem}]
		We first note that
		\begin{equation}\label{equal-J_4,H^1-in-terms-J-u+^H^1,J_u-_H^1}
			J_u^{\mathbb{H}^1}(r)=\mathcal{I}_{u^+}^{\mathbb{H}^1}(r)\mathcal{I}_{u^-}^{\mathbb{H}^1}(r). 
		\end{equation}
		So, since $\mathcal{I}_{u^+}^{\mathbb{H}^1}$ and $\mathcal{I}_{u^-}^{\mathbb{H}^1}$ are nonnegative, we reach the desired result if we prove that they are both monotone decreasing.\\
		We claim that $\mathcal{I}_{u^+}^{\mathbb{H}^1}(r)=\mathcal{I}_{u^-}^{\mathbb{H}^1}(r).$ Before proving this, we show that the claim implies the monotone decreasing behavior of $\mathcal{I}_{u^+}^{\mathbb{H}^1}(r)$ and $\mathcal{I}_{u^-}^{\mathbb{H}^1}(r).$ 
		
		Indeed,  by the obvious fact  that $\mathcal{I}^{\mathbb{H}^1}_u(r)=\mathcal{I}_{u^+}^{\mathbb{H}^1}(r)+\mathcal{I}_{u^-}^{\mathbb{H}^1}(r),$ we deduce 
		$\mathcal{I}^{\mathbb{H}^1}_u(r)\allowbreak=2\mathcal{I}_{u^+}^{\mathbb{H}^1}(r)=2\mathcal{I}_{u^-}^{\mathbb{H}^1}(r),$ which immediately gives the decreasing monotonicity of $\mathcal{I}_{u^+}^{\mathbb{H}^1}$ and $\mathcal{I}_{u^-}^{\mathbb{H}^1}$ from Lemma \ref{lemma-expl-count}. 
		
		As a byproduct of this remark, in this way, we prove that
		$J_u^{\mathbb{H}^1}(r)$ is monotone decreasing, because it is the product of two positive monotone decreasing functions, see \eqref{equal-J_4,H^1-in-terms-J-u+^H^1,J_u-_H^1}.
		
		It remains to show the claim holds. To this end, we write
		\begin{align*}
			\mathcal{I}_{u^+}^{\mathbb{H}^1}(r)&=\frac{1}{r^2}\int_{B^{\mathbb{H}^1}_r(0)\cap\{u>0\}}\frac{(1 -6(x^2+y^2))^2+9(-t+2xy)^2}{|\xi|_{\mathbb{H}^1}^{2}}\hspace{0.1cm}d\xi\\
			&=\frac{1}{r^2}\int_{B^{\mathbb{H}^1}_r(0)\cap\{x-3yt-2x^3>0\}}\frac{(1 -6(x^2+y^2))^2+9(-t+2xy)^2}{|\xi|_{\mathbb{H}^1}^{2}}\hspace{0.1cm}d\xi,
		\end{align*}
		and we apply the change of variables
		\begin{equation}\label{changeofvariables}\xi=(x,y,t)=T(\eta)=T(w,z,s)\coloneqq (-w,-z,s),\end{equation}
		which yields
		\begin{align*}
			&\mathcal{I}_{u^+}^{\mathbb{H}^1}(r)=\frac{1}{r^2}\int_{B^{\mathbb{H}^1}_r(0)\cap\{-w-3(-z)s-2(-w)^3>0\}}\frac{(1 -6(w^2+z^2))^2+9(-s+2wz)^2}{|\eta|_{\mathbb{H}^1}^{2}}\hspace{0.1cm}d\eta\\
			&=\frac{1}{r^2}\int_{B^{\mathbb{H}^1}_r(0)\cap\{w-3zs-2w^3<0\}}\frac{(1 -6(w^2+z^2))^2+9(-s+2wz)^2}{|\eta|_{\mathbb{H}^1}^{2}}\hspace{0.1cm}d\eta=J_{u^-}^{\mathbb{H}^1}(r),
		\end{align*}
		and thus the claim follows.
	\end{proof}
	\section{A further generalization with application}\label{gen_app}
	Let us consider now the following two phase continuous function
	$$
	u_{\alpha_1,\alpha_2}=\alpha_1 u^+-\alpha_2 u^-
	$$
	in $\mathbb{H}^1,$
	where, as usual, $u(x,y,t)=x-3yt-2x^3.$ Then,
	we first note that
	\begin{equation}\label{equal-J_4,H^1-in-terms-J-u+^H^1,J_u-_H^1gen}
		J_{ u_{\alpha_1,\alpha_2}}^{\mathbb{H}^1}(r)=\mathcal{I}_{\alpha_1 u^+}^{\mathbb{H}^1}(r)\mathcal{I}_{\alpha_2 u^-}^{\mathbb{H}^1}(r). 
	\end{equation}
	Since the zero level of $ u_{\alpha_1,\alpha_2}$ is the same of $u,$ we remark that
	\begin{align*}
		\mathcal{I}_{\alpha_1u^+}^{\mathbb{H}^1}(r) &=\frac{\alpha_1^2}{r^2}\int_{B^{\mathbb{H}^1}_r(0)\cap\{x-3yt-2x^3>0\}}\frac{(1 -6(x^2+y^2))^2+9(-t+2xy)^2}{|\xi|_{\mathbb{H}^1}^{2}}\hspace{0.1cm}d\xi,
	\end{align*}
	as well as
	\begin{align*}
		\mathcal{I}_{\alpha_2u^-}^{\mathbb{H}^1}(r) &=\frac{\alpha_1^2}{r^2}\int_{B^{\mathbb{H}^1}_r(0)\cap\{x-3yt-2x^3<0\}}\frac{(1 -6(x^2+y^2))^2+9(-t+2xy)^2}{|\xi|_{\mathbb{H}^1}^{2}}\hspace{0.1cm}d\xi.
	\end{align*}
	Hence, performing the same change of variables introduced in \eqref{changeofvariables}, we obtain \begin{equation}\label{relationship}
		\mathcal{I}_{\alpha_1u^+}^{\mathbb{H}^1}(r)=\frac{\alpha_1^2}{\alpha_2^2}\mathcal{I}_{\alpha_2u^-}^{\mathbb{H}^1}(r).
	\end{equation}
	On the other hand, in this case, keeping in mind \eqref{relationship} it results
	\begin{equation}\label{equal-J_4,H^1-in-terms-J-u+^H^1,J_u-_H^1gen2}\begin{split}
			\mathcal{I}_{u_{\alpha_1,\alpha_2}}^{\mathbb{H}^1}(r)&=\mathcal{I}_{\alpha_1 u^+}^{\mathbb{H}^1}(r)+\mathcal{I}_{\alpha_2u^-}^{\mathbb{H}^1}(r)=\left(\frac{\alpha_1^2}{\alpha_2^2}+1\right)\mathcal{I}_{\alpha_2 u^-}^{\mathbb{H}^1}(r)\\
			&=\left(\alpha_1^2+\alpha_2^2\right)\mathcal{I}_{u^-}^{\mathbb{H}^1}(r)=\frac{\alpha_1^2+\alpha_2^2}{2}\mathcal{I}_{u}^{\mathbb{H}^1}(r).\end{split}
	\end{equation}
	
	As a consequence of Lemma \ref{lemma-expl-count}, from \eqref{equal-J_4,H^1-in-terms-J-u+^H^1,J_u-_H^1gen2} follows that
	$$
	\mathcal{I}_{u_{\alpha_1,\alpha_2}}^{\mathbb{H}^1},\quad \mathcal{I}_{\alpha_2 u^-}^{\mathbb{H}^1},\quad \mathcal{I}_{\alpha_1 u^+}^{\mathbb{H}^1}
	$$   
	are monotone decreasing in a right neighborhood of $0$.  Hence, we conclude  from \eqref{equal-J_4,H^1-in-terms-J-u+^H^1,J_u-_H^1gen} that $J_{ u_{\alpha_1,\alpha_2}}^{\mathbb{H}^1}$ is monotone decreasing because it is the product of the two positive monotone decreasing functions  $\mathcal{I}_{\alpha_2 u^-}^{\mathbb{H}^1}$ and $ \mathcal{I}_{\alpha_1 u^+}^{\mathbb{H}^1}.$ 
	
	We conclude the paper with an application coming from the two phase function $u_{\alpha_1,\alpha_2}.$ 
	
	The construction of non-trivial solutions of two-phase free boundary problems is not easy even in the Euclidean setting. 
	
	Nevertheless, we remark that the function $u_{\alpha_1,\alpha_2}$ is solution of the following two phase problem in the Heisenberg group,  
	\begin{equation}\label{two_phase_Heisenberg_2final}
		\begin{cases}
			\Delta_{\mathbb{H}^1} u=0& \mbox{in }\Omega^+(u):= \{x\in \Omega:\hspace{0.1cm} u(x)>0\},\\
			\Delta_{\mathbb{H}^1} u=0& \mbox{in }\Omega^-(u):=\mbox{Int}(\{x\in \Omega:\hspace{0.1cm} u(x)\leq 0\}),\\
			|\nabla_{\mathbb{H}^1} u^+|^2-|\nabla_{\mathbb{H}^1} u^-|^2= g_{\alpha_1,\alpha_2}&\mbox{on }\mathcal{F}(u):=\partial \Omega^+(u)\cap \Omega,
		\end{cases}
	\end{equation}
	where $$g_{\alpha_1,\alpha_2}(x,y,t)=(\alpha_1^2-\alpha_2^2)\left((1 -6(x^2+y^2))^2+9(-t+2xy)^2\right).$$
	
	As a consequence, supposing that $g_{\alpha_1,\alpha_2}$ is a given function, with $\alpha_1,\alpha_2>0$  fixed numbers chosen in such a way that $\alpha_1^2-\alpha_2^2>0.$ Then $u_{\alpha_1,\alpha_2}$ is a solution of the two-phase free boundary problem 
	\begin{equation}\label{two_phase_Heisenberg_2final}
		\begin{cases}
			\Delta_{\mathbb{H}^1} u=0& \mbox{in }\Omega^+(u):= \{x\in \Omega:\hspace{0.1cm} u(x)>0\},\\
			\Delta_{\mathbb{H}^1} u=0& \mbox{in }\Omega^-(u):=\mbox{Int}(\{x\in \Omega:\hspace{0.1cm} u(x)\leq 0\}),\\
			|\nabla_{\mathbb{H}^1} u^+|^2-|\nabla_{\mathbb{H}^1} u^-|^2= g&\mbox{on }\mathcal{F}(u):=\partial \Omega^+(u)\cap \Omega.
		\end{cases}
	\end{equation}
	\bibliographystyle{abbrv}
	\bibliography{biblio}
\end{document}